\DeclareFontFamily{U}{BOONDOX-calo}{\skewchar\font=45 }
\DeclareFontShape{U}{BOONDOX-calo}{m}{n}{
  <-> s*[1.05] BOONDOX-r-calo}{}
\DeclareFontShape{U}{BOONDOX-calo}{b}{n}{
  <-> s*[1.05] BOONDOX-b-calo}{}
\DeclareMathAlphabet{\mathcalboondox}{U}{BOONDOX-calo}{m}{n}
\SetMathAlphabet{\mathcalboondox}{bold}{U}{BOONDOX-calo}{b}{n}
\DeclareMathAlphabet{\mathbcalboondox}{U}{BOONDOX-calo}{b}{n}
\newcommand{\addresseshere}{%
  \enddoc@text\let\enddoc@text\relax
}
\numberwithin{equation}{section}
\theoremstyle{plain}
\newtheorem{theorem}[subsection]{Theorem}
\newtheorem{lemma}[subsection]{Lemma}
\newtheorem{corollary}[subsection]{Corollary}
\newtheorem{proposition}[subsection]{Proposition}
\theoremstyle{definition}
\newtheorem{definition}[subsection]{Definition}
\newtheorem{assumption}[subsection]{Assumption}
\theoremstyle{remark}
\newtheorem{remark}[subsection]{Remark}
\newcommand{\ind}{\operatorname{ind}}
\newcommand{\id}{\operatorname{Id}}
\newcommand{\CC}{\mathbb{C}}
\newcommand{\RR}{\mathbb{R}}
\newcommand{\ZZ}{\mathbb{Z}}
\newcommand{\upper}{\uppercase\expandafter}
\newcommand{\n}{\nabla}
\newcommand{\p}{\partial}
\newcommand{\End}{\operatorname{End}}
\newcommand{\oB}{\bar{B}}
\newcommand{\spf}{\operatorname{sf}}
\newcommand{\IM}{\operatorname{Im}}
\renewcommand{\H}{\mathscr{H}}\newcommand{\HH}{\mathcal{H}}
\renewcommand{\P}{\mathscr{P}}\newcommand{\Q}{\mathcal{Q}}
\newcommand{\D}{\mathscr{D}}\newcommand{\B}{\mathscr{B}}
\newcommand{\M}{\mathcal{M}}\newcommand{\N}{\mathcal{N}}
\newcommand{\MM}{\mathscr{M}}
\newcommand{\E}{\mathcal{E}}\newcommand{\F}{\mathcal{F}}
\renewcommand{\c}{\mathcalboondox{c}}
\newcommand{\f}{\mathcalboondox{f}}
\newcommand{\ch}{\operatorname{ch}}
\newcommand{\oM}{\overline{M}}
\newcommand{\Ib}{\mathcal{I}_{\rm Bulk}}
\newcommand{\Ie}{\mathcal{I}_{\rm Edge}}
\newcommand{\Herm}{\operatorname{Herm}}
\renewcommand{\b}{\bullet}
\newcommand{\even}{\operatorname{even}}
\newcommand{\odd}{\operatorname{odd}}
\newcommand{\op}{\bar{\p}}
\renewcommand{\>}{\rangle}
\begin{document}

\title{The spectral Flow of a family of Toeplitz operators}

\author{Maxim Braverman\\ ({\tiny{With an appendix by}} Koen van den Dungen)}
\address{Department of Mathematics,
Northeastern University,
Boston, MA 02115,
USA}

\email{maximbraverman@neu.edu}
\urladdr{https://web.northeastern.edu/braverman/}

\subjclass[2010]{58J30, 32T15, 19K56, 58J32, 58Z05	}
\keywords{spectral flow, Toeplitz, Atiyah-Patodi-Singer, index, pseudoconvex, topological insulators, bulk-boundary correspondence}
\thanks{Partially supported by the Simons Foundation collaboration grant \#G00005104.}

\begin{abstract}
We show that the (graded) spectral flow of a family of Toeplitz operators on a complete Riemannian manifold is equal to the index of a certain Callias-type operator. When the dimension of the manifold is even this leads to a cohomological formula for the spectral flow. As an application, we compute the spectral flow of a family of Toeplitz operators on a strongly pseudoconvex domain in $\CC^n$. This result is  similar to the Boutet de Monvel's computation of the index of a single Toeplitz operator on a strongly pseudoconvex domain. Finally, we show that the bulk-boundary correspondence in the tight-binding  model of topological insulators is a special case of our result. 

In the appendix, Koen van den Dungen reviewed the main result in the context of (unbounded) KK-theory.
\end{abstract}


\maketitle
\setcounter{tocdepth}{1}



\section{Introduction}\label{S:introduction}

In the study of topological insulators the {\em edge index} is often defined as the spectral flow of a certain family of Toeplitz operators on a circle \cite{Hatsugai93,KellendonkRichterSB02,ElbauGraf02,YuWuXie17bulk-edge,GrafPorta13,ProdanSchulz16book,Hayashi16}. The {\em  bulk-edge correspondence} establishes the equality of the edge index  and the {\em bulk index}, which can be interpreted as the index of a certain Dirac-type operator. Thus we obtain the  equality between the spectral flow of a family of Toeplitz operators and the index of a Dirac-type operator. This is similar but different from the classical ``desuspension" result of Baum-Douglass \cite{BaumDouglas82} and Booss-Wojciechowski \cite{BoossWojciechowski85} (see also \cite[\S17]{BoosWoj93book}) which establishes an equality between the spectral flow of a family of Dirac-type operators and the index of a Toeplitz operator. The goal of this note is to generalize the bulk-edge correspondence to a formula for the spectral flow of a quite general family of Toeplitz operators. 

We note that the result of \cite{BaumDouglas82,BoossWojciechowski85} were extended to a family case by Dai and Zhang \cite{DaiZhang98}. It would be interesting to obtain a similar extension of our results.

\subsection{A family of Toeplitz opertors}\label{SS:IToeplitz}
Suppose $E=E^+\oplus E^-$ is a Dirac bundle over a complete Riemannian manifold $M$ and let $D$  be the corresponding Dirac operator on the space $L^2(M,E\otimes\CC^k)$. We denote by $\HH= \HH^+\oplus\HH^-$ the kernel of $D$ and by $P:L^2(M,E\otimes\CC^k)\to \HH$ the orthogonal projection. Let $\f= \{f_t\}_{t\in S^1}$ be a periodic family of smooth functions on $M$ with values in the space of Hermitian $k\times{}k$-matrices. For $t\in S^1$, we  denote by $M_{f_t}:L^2(M,E\otimes\CC^k)\to L^2(M,E\otimes\CC^k)$ the multiplication by $f_t$. The {\em Toeplitz operator} is the composition $T_{f_t}:= P\circ{}M_{f_t}:\HH\to \HH$ ($t\in S^1$).

Under the assumption that 
\begin{enumerate}
\item 0 is an isolated point of the spectrum of $D$;
\item for each $t\in S^1$ the differential $df_t$ vanishes at infinity on $M$;
\item $f$ is {\em invertible at infinity} (cf. Definition~\ref{D:invertible});
\item $\frac{\p}{\p t}f_t$ is bounded;
\end{enumerate}
we show that $T_{f_t}$ is a family of Fredholm operators. Let $T_{f_t}^\pm$ denote the restriction of $T_{f_t}$ to $\HH^\pm$ and let $\spf(T_{f_t}^\pm)$ denote the spectral flow of $T_{f_t}$. In this note we compute  $\spf(T_{f_t}^+)-\spf(T_{f_t}^-)$. In many applications, including the Toeplitz operator on a strongly pseudoconvex domain,   $\HH^-=\{0\}$ and, hence, $\spf(T_{f}^-)=0$. Thus in those cases we compute $\spf(T_{f_t}^+)$.

\subsection{A Callias-type operator}\label{SS:ICallias}
Let $\M=S^1\times M$ and let $\E=\E^+\oplus\E^-$ be the lift of $E$ to $\M$. It is naturally an ungraded Dirac bundle and we denote the corresponding Dirac operator by $\D$. Let $\MM_\f:L^2(\M,\E\otimes\CC^k)\to L^2(\M,\E\otimes\CC^k)$ denote the multiplication by $\f$ and consider the {\em Callias-type operator} 
\(
	\B_{c\f} :=  \D+ic\MM_\f,
\)
where $c>0$ is a large constant.
Our assumptions guarantee that this operator is Fredholm and our main result (Theorem~\ref{T:sfToeplitz=Callias}) states that 
\begin{equation}\label{E:Ispf=Callias}
	\spf(T_{f_t}^+)\ - \  \spf(T_{f_t}^-)\ = \ \ind \B_{c\f}.
\end{equation}
In the appendix Koen van den Dungen presented a KK-theoretical interpretation of this equality. 
\subsection{The even dimensional case: a cohomological formula}\label{SS:Ievendim}
Suppose now that the dimension of $M$ is even. Then the dimension of $\M$ is odd and  by the Callias-type index theorem \cite{Anghel93Callias,Bunke95}  the index of $\B_{c\f}$ is equal to the index of a certain Dirac operator on a compact hypesurface $\N\subset \M$. Applying the Atiyah-Singer index theorem we thus obtain a  cohomological formula for $\ind\B_{c\f}$ and, hence, for $\spf(T_{f_t}^+)-\spf(T_{f_t}^-)$, cf. Corollary~\ref{C:evencase}.

\subsection{A family of Toeplitz operators on a strongly pseudoconvex domain}\label{SS:Ipseudoconvex}
Consider  a strongly pseudoconvex domain $\oM\subset \CC^n$ with smooth boundary.  We denote its boundary by $N:=\p\oM$ and consider its interior $M$ as a complete Riemannian manifold endowed with the Bergman metric,  cf. \cite[\S7]{Stein72book}. Let $D = \op+\op^*$ be the Dolbeault-Dirac operator on the space $\Omega^{n,\b}(M,\CC^k)$ of $(n,\b)$-forms on $M$ with values in the trivial bundle $\CC^n$. Let $\f=\{f_t\}_{t\in S^1}$ be a  periodic family of smooth functions on $\oM$ with values in the space of Hermitian $k\times{}k$-matrices. We assume that the restriction of $\f$ to $N:=\p\oM$ is invertible. Then it follows from  \cite[\S5]{DonnellyFefferman83} that $D$ and $\f$ satisfy our assumptions (i)--(iv). Moreover, in this case the space $\HH^-=\{0\}$. Thus \eqref{E:Ispf=Callias} computes $\spf(T_{f_t})= \spf(T_{f_t}^-)$. 

Let $\N:=S^1\times{}N$ denote the boundary of $\overline{\M}:= S^1\times\oM$. Since the restriction of $\f$ to $\N$ is invertible, we obtain a direct sum decomposition  $\N\times\CC^k= \F_{\N+}\oplus\F_{\N-}$ of the trivial bundle $\N\times\CC^k$ into the positive and negative eigenspaces of $\f$. Our Theorem~\ref{T:sfpseudoconvex} states that 
\begin{equation}\label{E:Isfpseudconvex}
	\spf (T_{f_t}) \ = \ 
	\int_{\N}\,\ch(\F_{\N+}),
\end{equation}
where $\ch(\F_{\N+})$ denotes the Chern character of the bundle $\F_{\N+}$.

\subsection{A tight-binding model of topological insulators and the bulk-boundary correspondence}\label{SS:IGraf-Porta}
We considere a {\em tight binding model} on the lattice $\ZZ_{\ge0}\times\ZZ$. Note that this model covers not only crystals with square lattice, but many other materials, including the hexagon lattice of graphene, cf. Section~3 of \cite{GrafPorta13}.

In the bulk (i.e. far from the boundary) the lattice looks like $\ZZ\times\ZZ$. The {\em bulk Hamiltonian} is a bounded map $H:l^2(\ZZ\times\ZZ,\CC^k)\to l^2(\ZZ\times\ZZ,\CC^k)$  which is periodic with period one in both directions of the lattice. By performing the Fourier transform  of $H$ (the {\em Bloch decomposition}) we obtain a family of self-adjoint $k\times k$-matrices $H(s,t)$ ($(s,t)\in S^1\times S^1$). 

{\em We assume that the bulk Hamiltonian has a spectral gap at Fermi level $\mu\in \RR$}. In particular, the operator $H(s,t)-\mu$ is invertible  for all $(s,t)\in S^1\times S^1$. Thus the trivial bundle $(S^1\times{}S^1)\times\CC^k$ over the torus $S^1\times{}S^1$ decomposes into the direct sum
\(
	(S^1\times{}S^1)\times\CC^k =  \F_+\oplus \F_-
\)
of positive and negative eigenbundles of $H(s,t)-\mu$. The bundle $\F_+$ is referred to as the {\em Bloch bundle}. The {\em bulk index} is defined by
\begin{equation}\label{E:Ibulkindex}\notag
	\Ib\ := \ \int_{S^1\times S^1}\,\ch(\F_+).
\end{equation}

We now take the edge into account, i.e. restrict to the half-lattice $\ZZ_{\ge0}\times\ZZ$. The Fourier transform of the bulk Hamiltonian $H$ in the direction ``along the edge" transforms $H$ into a family of self-adjoint translationally invariant operators $H(t):l^2(\ZZ,\CC^k)\to  l^2(\ZZ,\CC^k)$.  Let $\Pi:l^2(\ZZ,\CC^k)\to l^2(\ZZ_{\ge0},\CC^k)$ denote the projection. The  {\em edge Hamiltonian} is the family of Toeplitz operators
\begin{equation}\label{E:IegdeHamiltonian}\notag
	H^\#(t)\  := \ \Pi\circ H(t)\circ \Pi:\,
	l^2(\ZZ_{\ge0},\CC^k)\ \to \ l^2(\ZZ_{\ge0},\CC^k),
	\qquad t\in S^1.
\end{equation}
The {\em edge index}\/ $\Ie$\/ of  the Hamiltonian $H$ is the spectral flow of the edge Hamiltonian
\begin{equation}\label{E:Iedgeindex}\notag
	\Ie\ := \ \spf\big(H^\#(t)\big). 
\end{equation}
Both the bulk and the edge Hamiltonians extend to operators on the unit disc $B\subset \CC$.  The disc is the simplest example of a strongly pseudoconvex domain. Applying \eqref{E:Isfpseudconvex} to this situation we obtain the following {\em bulk-boundary correspondence} equality
\begin{equation}\label{E:Ibulkboundary}
		\Ib\ = \ \Ie.
\end{equation}
Thus \eqref{E:Isfpseudconvex} is an extension of \eqref{E:Ibulkboundary} to general strongly pseudoconvex domains. For this reason we refer to the equality \eqref{E:Isfpseudconvex} as the {\em generalized bulk-boundary correspondence}.

\subsection*{Acknowledgements}
I would like to thank Jacob Shapiro for interesting discussion and bringing some references to my attention.

\section{The main result}\label{S:main}

In this section we formulate our main result -- the equality between the spectral flow of a family of Toeplitz operators on a complete Riemannian manifold $M$ and the index of a Callias-type operator on $M$. 

\subsection{The Dirac operator}\label{SS:Dirac}
Let $M$ be a complete Riemannian manifold and let $E=E^+\oplus E^-$ be a graded Dirac bundle over $M$, cf. \cite[\S{}II.5]{LawMic89}, i.e. a Hermitian vector bundle endowed with the Clifford action 
\[	
	c:T^*M\to \End(E), \qquad 
	\big(\,
	c(v):E^\pm\to E^\mp, \quad c(v)^2=-|v|^2, \quad c(v)^*=-c(v)
	\,\big),
\]
and a Hermitian connection $\n^E=\n^{E^+}\oplus\n^{E^-}$, which is compatible with the Clifford action in the sense that 
\[
	[\n^E_u,c(v)]\ = \ c(\n^{LC}_uv), \qquad\text{for all}
	 \ \ u\in TM,
\]
where $\n^{LC}$ is the Levi-Civita connection on $T^*M$. 

We extend the Clifford action to the product $E\otimes\CC^k$ and we denote by  $D$ the associated Dirac operator. In local coordinates it can be written as $D=\sum_jc(dx^j)\n^E_{\p_j}$. We view $D$ as an unbounded  self-adjoint operator $D:L^2(M,E\otimes\CC^k)\to L^2(M,E\otimes\CC^k)$.

Throughout the paper we make the following

\begin{assumption}\label{A:1}
Zero is an isolated point of the spectrum of $D$. 
\end{assumption}

Let $\HH=\HH^+\oplus \HH^-\subset L^2(M,E\otimes\CC^k)$ denote the kernel of $D$ and let $P:L^2(M,E\otimes\CC^k)\to \HH$ be the orthogonal projection. Here $\HH^\pm$ is the restriction of $\HH$ to $L^2(M,E^\pm\otimes\CC^k)$. We denote by $P^\pm:L^2(M,E^\pm\otimes\CC^k)\to \HH^\pm$ the restriction of $P$ to $L^2(M,E^\pm\otimes\CC^k)$.

\subsection{The Toeplitz operator}\label{SS:Toeplitz}
Let $BC(M;k)$ denote the Banach space of bounded continuous functions on $M$ with values in the space  $\Herm(k)$ of Hermitian complex-valued $k\times{}k$-matrices. 
We denote by $C^\infty_g(M;k)\subset BC(M;k)$ the subspace of bounded $C^\infty$-functions such that $df$ vanishes at infinity of $M$.

For  $f\in C^\infty_g(M;k)$ we denote by $M_f:L^2(M,E\otimes\CC^k)\to L^2(M,E\otimes\CC^k)$ the multiplication by $1\otimes{f}$ and by $M_f^\pm$ the restriction of $M_f$ to $L^2(M,E^\pm\otimes\CC^k)$.

\begin{definition}\label{D:Toeplitz}
The operator 
\begin{equation}\label{E:Toeplitz}
	T_f\ := \ P\circ M_f:\, \HH\ \to \HH
\end{equation}
is called the {\em Toeplitz operator defined by $f$}.  We denote by $T_f^\pm$ the restriction of $T_f$ to $\HH^\pm$. 
\end{definition}

\begin{definition}\label{D:invertible}
We say that a matrix-valued function $f\in C^\infty_g(M;k)$ is {\em invertible at infinity} if there exists a compact set $K\subset M$ and $C_1>0$  such that $f(x)$ is an invertible matrix for all $x\not\in K$ and $\big\|f(x)^{-1}\big\|<C_1$ for all $x\not\in K$.
\end{definition}

The following result is proven in \cite[Lemma~2.6]{Bunke00}
\begin{proposition}\label{P:Fredholm}
If $D$ satisfies Assumption~\ref{A:1} and  $f\in C^\infty_g(M;k)$  is invertible at infinity then the Toeplitz operator $T_f$ is Fredholm. 
\end{proposition}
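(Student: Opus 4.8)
The plan is to build a parametrix for $T_f$ modulo compact operators. First I would exploit the hypothesis that $f$ is invertible at infinity: fix $K$ and $C_1$ as in Definition~\ref{D:invertible}, choose $\chi\in C^\infty(M)$ supported outside a neighbourhood of $K$ with $\chi\equiv 1$ outside a slightly larger compact set, and set $g:=\chi\,f^{-1}$. Then $g\in C^\infty_g(M;k)$: it is Hermitian‑valued, bounded by $C_1$, and $dg=(d\chi)f^{-1}+\chi\,d(f^{-1})=(d\chi)f^{-1}-\chi f^{-1}(df)f^{-1}$ vanishes at infinity because $df$ does and $\|f^{-1}\|<C_1$ off $K$. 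Moreover $fg=gf=\chi\cdot\1_k$, so $fg-\1_k$ and $gf-\1_k$ are compactly supported. The goal is to show that $T_gT_f-\id_\HH$ and $T_fT_g-\id_\HH$ are compact operators on $\HH$, which yields the Fredholm property of $T_f$.

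Next I would record the algebraic identity. Using $M_gM_f=M_{gf}$ and $Pu=u$ for $u\in\HH$,
\[
   T_gT_f-\id_\HH \;=\; -\,P\,M_g\,(\id-P)\,M_f\big|_{\HH}\;+\;P\,M_{gf-\1_k}\big|_{\HH}.
\]
For $u\in\HH=\ker D$ we have $u\in\dom D$ and $Du=0$; since $f$ and $df$ are bounded and $M$ is complete, $M_fu$ again lies in $\dom D$ with $DM_fu=[D,M_f]u=c(df)\,u$, Clifford multiplication by the matrix‑valued $1$‑form $df$. Here Assumption~\ref{A:1} enters: as $0$ is isolated in $\sigma(D)$ there is $\delta>0$ with $(-\delta,\delta)\cap\sigma(D)=\{0\}$, so $G:=D^{-1}(\id-P)$ is a bounded operator on $L^2$ of norm $\le\delta^{-1}$ with $\id-P=GD$ on $\dom D$. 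Hence $(\id-P)M_fu=G\,c(df)\,u$ and
\[
   T_gT_f-\id_\HH \;=\; -\,P\,M_g\,G\,c(df)\big|_{\HH}\;+\;P\,M_{gf-\1_k}\big|_{\HH}.
\]

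It remains to show both terms are compact, and the one new ingredient is that $\HH$ embeds boundedly into $H^s_{\loc}$ for every $s$: any $v\in\ker D$ is smooth by elliptic regularity, and the interior elliptic estimate for the first‑order elliptic operator $D$ together with $Dv=0$ gives $\|v\|_{H^s(K')}\le C_{s,K'}\|v\|_{L^2(M)}$. Consequently, for any smooth compactly supported bundle endomorphism $\phi$, multiplication by $\phi$ carries $\HH$ into $H^s$‑sections supported in the fixed compact set $\supp\phi$, and therefore $M_\phi|_{\HH}\colon\HH\to L^2$ is compact by the Rellich lemma. Taking $\phi=gf-\1_k$ disposes of the second term. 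For the first term, $c(df)$ is Clifford multiplication by $df$, which vanishes at infinity; given $\varepsilon>0$ I would pick a smooth compactly supported $\psi_\varepsilon$ with $0\le\psi_\varepsilon\le1$, equal to $1$ on a compact set outside which $\|df\|<\varepsilon$. Then $M_g\,G\,c\big((df)\psi_\varepsilon\big)|_{\HH}$ is compact (its rightmost factor is multiplication by a smooth compactly supported endomorphism, hence compact as above, and $G,M_g,P$ are bounded), while $\big\|P\,M_g\,G\,c\big((df)(1-\psi_\varepsilon)\big)\big\|\le C_1\,\delta^{-1}\,\varepsilon$. Letting $\varepsilon\to 0$ exhibits $P\,M_g\,G\,c(df)|_{\HH}$ as a norm limit of compact operators, hence compact. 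Thus $T_gT_f-\id_\HH$ is compact; since $dg$ also vanishes at infinity and $fg-\1_k$ is compactly supported, interchanging $f$ and $g$ in the same computation shows $T_fT_g-\id_\HH$ is compact, and therefore $T_f$ is Fredholm. (This reproduces the argument of \cite[Lemma~2.6]{Bunke00}.)

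I expect the main obstacle to be the first term: unlike $gf-\1_k$, the $1$‑form $df$ is only known to vanish at infinity rather than to have compact support, so Rellich cannot be invoked directly and one must instead approximate $df$ by compactly supported forms while controlling the remainder in operator norm — this is exactly where hypothesis (ii) is used. The remaining points (the domain check that $M_fu\in\dom D$, using completeness of $M$, and boundedness of $G$, using Assumption~\ref{A:1}) are routine.
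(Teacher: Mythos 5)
Your proof is correct, and it follows the same overall strategy as the source the paper cites for this statement (Bunke's Lemma~2.6, whose commutator argument the paper reproduces in the proof of Lemma~\ref{L:commutatorcompact}): construct the parametrix $T_g$ with $g=\chi f^{-1}$ and show that the error terms are compact using the spectral gap, the decay of $df$ at infinity, and Rellich. The one place where your implementation genuinely diverges is the mechanism for the key compactness step. The paper's route writes $P$ as a contour integral of the resolvent, computes $[R_D(\lambda),M_f]=R_D(\lambda)\,c(df)\,R_D(\lambda)$, and gets compactness of the full commutator $[P,M_f]$ on $L^2$ from compactness of $c(df)R_D(\lambda)$; the parametrix identity then only needs $[P,M_f]$ compact plus $P M_{gf-\1_k}P$ compact. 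You instead work directly on $\HH$, using $G=D^{-1}(\id-P)$ and the identity $(\id-P)M_f|_{\HH}=G\,c(df)|_{\HH}$, and you establish compactness of multiplication by compactly supported endomorphisms on $\HH$ via elliptic regularity of harmonic sections rather than via the resolvent. Both routes use the same two inputs (Assumption~\ref{A:1} for boundedness of $G$ resp.\ the contour integral, and the vanishing of $df$ at infinity handled by a cutoff-and-norm-limit approximation), so the difference is cosmetic rather than structural; your version is slightly more hands-on, the paper's is slightly more reusable since compactness of $[P,M_f]$ on all of $L^2$ is exactly what is needed again in Corollary~\ref{C:PMQcompact}. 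All the technical points you flag (that $M_f$ preserves $\dom D$, that $\|c(\alpha)\|=|\alpha|$ pointwise so the remainder is $O(\varepsilon)$, and that $dg$ also vanishes at infinity so the argument symmetrizes) check out.
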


\subsection{A family of self-adjoint Toeplitz operators}\label{SS:family}
Let now $S^1= \{e^{it}:t\in [0,2\pi]\}$ be the unit circle. Consider a smooth  family $f:S^1\to C^\infty_g(M;k)$, $t\mapsto f_t$, of matrix valued functions. Assume that $f_t$ is invertible at infinity for all $t\in [0,2\pi]$.  Then $T_{f_t}$ ($t\in S^1$) is a periodic family of self-adjoint Fredholm operators. Our goal is to compute the spectral flow of this family.  We make the following 

\begin{assumption}\label{A:2}
There exists a constant $C_2>0$ such that $\big\|\frac{\p}{\p t}f_t(x)\big\|< C_2$ for all $t\in S^1,\ x\in M$. 
\end{assumption}	

If $f_t$ is invertible at infinity for all $t\in S^1$ and satisfies Assumption~\ref{A:2}, then there exists a compact set $K\subset M$ and  a large enough constant $\alpha>0$ such that   
\begin{equation}\label{E:ddtf<alpha}
	\frac{\p}{\p t}f_t(x)\ < \ 
	\frac{\alpha}2\,f_t(x)^2,
	\qquad\text{for all} \ \ x\not\in K.
\end{equation}
Here the inequality $A<B$ between two self-adjoint matrices means that for any vector $v\not=0$, we have $\<Av,v\><\<Bv,v\>$.

\subsection{A Callias-type operator on $S^1\times M$}\label{SS:CalliasSxM}Set $\M= S^1\times M$. We write points of $\M$ as $(t,x)$, $t\in S^1, \ x\in M$. Denote by $\pi_1:S^1\times M\to S^1$ and $\pi_2:S^1\times M\to M$ the natural projections. By a slight abuse of notation we denote the pull-backs $\pi^*_1dt,\,\pi_2^*dx\in T^*\M$ by  $dt$ and $dx$ respectively. Set  
\[
	\E := \ \pi_2^*E.
\] 
Then $\E$ is naturally an ungraded Dirac bundle with Clifford action $\c:T^*\M\to \End(\E)$ such that $\c(dx)= c(dx)$ and $\c(dt)$ is given with respect to the decomposition $\E=\pi_2^*E^+\oplus\pi_2^*E^-$ by the matrix
\[
	\c(dt)\ = \ \begin{pmatrix}
	i\cdot\id&0\\0&-i\cdot\id
	\end{pmatrix}.
\] 
Let $\D$ be the corresponding Dirac operator. With respect to the decomposition 
\begin{equation}\label{E:L2=L2timesL2}
		L^2(\M,\E\otimes\CC^k)\ = \ 
		L^2(S^1)\otimes L^2(M,E\otimes\CC^k).
\end{equation}
it takes the form 
\begin{equation}\label{E:tilD}
	\D\ = \ \c(dt)\,\frac{\p}{\p t}\otimes1\ + \ 1\otimes D.
\end{equation}
We remark that, as opposed to $D$, the operator $\D$ is not graded. 

Let now $f_t\in C^\infty_g(M;k)$ ($t\in S^1$) be a smooth  periodic family of invertible at infinity matrix valued functions satisfying Assumption~\ref{A:1}.  We consider the family $f_t$ as a smooth function on $\M$ and denote it by $\f$. Let $\MM_\f:L^2(\M,\E\otimes\CC^k)\to L^2(\M,\E\otimes\CC^k)$ denote the multiplication by $\f$. Then   the commutator 
\[
	[\D,\MM_\f]\ := \ \D\circ\MM_\f-\MM_\f\circ\D
\] 
is a zero-order differential operator, i.e.,  a bundle map $\E\otimes\CC^k\to \E\otimes\CC^k$.


From \eqref{E:ddtf<alpha} and our assumption that $df_t$ vanishes at infinity, we now conclude that there exist  constants $c,d>0$ and  a  compact set $\mathcal{K}\subset \M$, called {\em an essential support} of $\B_{c\f}$, such that 
\begin{equation}\label{E:Calliascond}
	\big[\D,c\,\MM_\f\big](t,x) \ <  \ c^2{\MM_\f(t,x)}^2-d,
	\qquad \text{for all}\ \ (t,x)\not\in \mathcal{K}.
\end{equation}
It follows that 
\begin{equation}\label{E:Bft}
	\B_{c\f}\ := \ \D\ + \ i\,c\,\MM_\f
\end{equation}
is a {\em Callias-type operator} in the sense of \cite{Anghel93Callias,Bunke95} (see also \cite[\S2.5]{BrCecchini17}). In particular, it is Fredholm.

Our main result is the following

\begin{theorem}\label{T:sfToeplitz=Callias}
Let $E=E^+\oplus{}E^-$ be a Dirac bundle over a complete Riemannian manifold $M$ and let $f_t\in C^\infty_g(M;k)$ ($t\in S^1$) be a smooth  periodic family of invertible at infinity matrix-valued functions. Suppose that Assumptions~\ref{A:1} and \ref{A:2} are satisfied. Then 
\begin{equation}\label{E:sfToeplitz=Callias}
	\spf(T_{f_t}^+)\ - \  \spf(T_{f_t}^-)\ = \ \ind{\B_{c\f}}.
\end{equation}
\end{theorem}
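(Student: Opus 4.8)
The plan is to realize both sides of \eqref{E:sfToeplitz=Callias} as the index of one and the same operator on $\M = S^1\times M$, by comparing the Callias operator $\B_{c\f}$ with a suspension-type operator built from the family of Toeplitz operators $T_{f_t}$. First I would invoke the standard fact (going back to Atiyah–Patodi–Singer and Baum–Douglas) that the graded spectral flow $\spf(T_{f_t}^+) - \spf(T_{f_t}^-)$ equals the index of the operator $\frac{\p}{\p t} + \T_t$ acting on $L^2(S^1, \HH^+) \oplus L^2(S^1,\HH^-)$ with the appropriate grading, where $\T_t$ is the self-adjoint family $T_{f_t}^+ \oplus (-T_{f_t}^-)$; more precisely one forms the operator $\c(dt)\frac{\p}{\p t} + T_{f_t}$ on $L^2(S^1)\otimes\HH$ and identifies its index with $\spf(T_{f_t}^+) - \spf(T_{f_t}^-)$, using that the grading operator on $\E$ restricts on $\HH$ to $\pm 1$ on $\HH^\pm$. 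This reduces the theorem to showing
\[
	\ind\Bigl(\c(dt)\tfrac{\p}{\p t}\otimes 1 + 1\otimes(P M_{f_t}|_\HH)\Bigr) \ = \ \ind \B_{c\f}.
\]

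The core of the argument is then a deformation/linear-homotopy between $\B_{c\f}$ and the suspended Toeplitz operator. I would let $c \to \infty$ (after rescaling) so that the zeroth-order term $ic\MM_\f$ dominates, and use a normal-form analysis near the kernel $\HH$ of $D$: since $0$ is isolated in $\spec(D)$ (Assumption~\ref{A:1}), the spectral projection $P$ onto $\HH$ is smooth and the operator $\D + ic\MM_\f$ decomposes, up to the compact/finite-rank error that does not affect the index, into a ``large'' invertible piece on $(1-P)L^2$ (where $D$ plus the large zeroth-order Callias term is invertible by the essential-support estimate \eqref{E:Calliascond}) and a piece on $PL^2 = L^2(S^1)\otimes\HH$ that is conjugate to $\c(dt)\frac{\p}{\p t} + P M_{f_t} P$. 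This is exactly the mechanism by which a Callias operator ``localizes'' onto the kernel of the Dirac operator; I expect to carry it out via a Schur-complement argument on the $2\times 2$ block decomposition of $\B_{c\f}$ with respect to $P \oplus (1-P)$, showing the off-diagonal blocks and the $(1-P,1-P)$-block contribute trivially to the index for $c$ large. The inequality \eqref{E:ddtf<alpha}, promoted to \eqref{E:Calliascond}, is what guarantees the $(1-P)$-block is invertible outside a compact set, hence Fredholm with a controlled index that cancels in the deformation.

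The main obstacle — and the step requiring genuine care rather than routine bookkeeping — is the interchange of the two limiting procedures: the Callias index is defined as $c \to \infty$ of a fixed-domain Fredholm operator, while the localization onto $\HH$ is cleanest in the same limit, and one must check that the error terms (the commutator $[\D, P]$, which is not zero since $P$ does not commute with $\frac{\p}{\p t}$-free operators but does commute with $D$; and the boundedness of $\frac{\p}{\p t}f_t$ from Assumption~\ref{A:2}) stay relatively compact uniformly in $c$. Concretely I would estimate $\|(1-P) M_{f_t} P\|$ and the resolvent $((1-P)(\D + ic\MM_\f)(1-P))^{-1} = O(1/c)$ using that $D$ restricted to $(1-P)L^2$ is bounded below, then show the Schur complement on $PL^2$ equals $\c(dt)\frac{\p}{\p t} + T_{f_t} + O(1/c)$ in operator norm, so that its index stabilizes to $\ind(\c(dt)\frac{\p}{\p t} + T_{f_t})$ for $c \gg 0$. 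An alternative route, which I would mention as a cross-check, is to avoid the $c\to\infty$ limit by directly building an explicit homotopy of Callias-type operators $\D + i s(\text{large }\MM_\f) + i(1-s)(\text{suspension of }T_{f_t})$ through Fredholm operators, invoking homotopy invariance of the index; but this requires verifying the Callias condition \eqref{E:Calliascond} along the whole path, which again reduces to the same estimates. Either way, once the identification of indices is in place, \eqref{E:sfToeplitz=Callias} follows by combining it with the spectral-flow-as-index identity from the first paragraph.
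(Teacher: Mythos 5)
Your overall skeleton matches the paper's: step one (the graded spectral flow equals the index of the suspensions $\p_t\mp T_{f_t}^\pm$ on $L^2(S^1,\HH^\pm)$, via Atiyah--Patodi--Singer/Robbin--Salamon) is exactly Lemma~\ref{L:sf=ind}, and step two is indeed a $2\times2$ block analysis of $\B_{c\f}$ with respect to $P\oplus Q$, $Q=1-P$. But the mechanism you propose for discarding the off-diagonal and $Q$-blocks has a genuine gap. First, there is no $c\to\infty$ limit in the problem: the Callias index is defined for a fixed sufficiently large $c$ (and is independent of $c$ by homotopy), so ``interchanging two limiting procedures'' is not where the difficulty lies. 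Second, your central estimate $\bigl((1-P)(\D+ic\MM_\f)(1-P)\bigr)^{-1}=O(1/c)$ is unjustified and in general false: $\f$ is only invertible \emph{outside} a compact set, so on the essential support the zeroth-order term need not dominate, and the lower bound on the $Q$-block there comes from the spectral gap of $D$, which is independent of $c$. Even granting the estimate, the off-diagonal blocks $ic\,P\MM_\f Q$ and $ic\,Q\MM_\f P$ grow like $c$, so the Schur-complement correction $c^2\,P\MM_\f Q\,(Q\B_{c\f}Q)^{-1}Q\MM_\f P$ would be $O(c)$, not $o(1)$; the claimed norm convergence of the Schur complement to the suspended Toeplitz operator does not follow.

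The missing idea is that the off-diagonal blocks are \emph{compact} for every fixed $c$. This is Lemma~\ref{L:commutatorcompact}: writing $P$ as a contour integral of the resolvent $R_D(\lambda)$ around $0$ (Assumption~\ref{A:1}), one has $[R_D(\lambda),\MM_\f]=R_D(\lambda)\,\c(d_x\f)\,R_D(\lambda)$, and since $d_xf_t$ vanishes at infinity, Rellich's lemma makes $\c(d_x\f)R_D(\lambda)$ compact; hence $[P,\MM_\f]$, and therefore $P\MM_\f Q$ and $Q\MM_\f P$, are compact. This is precisely where the hypothesis $f_t\in C^\infty_g(M;k)$ enters, and your proposal never uses it. With compactness in hand the index of $\B_{c\f}$ splits as the sum of the indices of the two diagonal blocks for fixed $c$, and the $Q$-block is handled not by a resolvent estimate but by the homotopy $\B_{c\f,u}=1\otimes D+u\bigl(\c(dt)\p_t+ic\MM_\f\bigr)$, $0\le u\le1$, whose $Q$-block is invertible at $u=0$ and Fredholm throughout, hence of index zero. (A further small slip: $[\D,1\otimes P]=0$, since $1\otimes P$ commutes with both $\c(dt)\p_t\otimes1$ and $1\otimes D$; the only non-commuting term, and the only analytic difficulty, is $[\MM_\f,P]$.)
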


\begin{remark}\label{R:nonvanishing}
Note that, as opposed to $df$, the differential $d\f$ does not vanish at infinity. Because of this $\B_{c\f}$ does not satisfy the conditions of Corollary~2.7 of \cite{Bunke00} and its index does not vanish in general. 
\end{remark}

\begin{remark}\label{R:vanishing of sfT-}
In our main applications $H^-=\{0\}$. Hence, $\spf(T_{f_t}^-)=0$ and \eqref{E:sfToeplitz=Callias} computes $\spf(T_{f_t}^+)$.
\end{remark}

The proof of Theorem~\ref{T:sfToeplitz=Callias} is given in Section~\ref{S:prsfToeplitz=Callias}.

\subsection{The even dimensional case: a cohomological formula}\label{SS:evendim}
Suppose now that the dimension of $M$ is even. Then the dimension of $\M$ is odd and  by the Callias-type index theorem \cite{Anghel93Callias,Bunke95}  the index of $\B_{c\f}$ is equal to the index of a certain Dirac operator on a compact hypesurface $\N\subset \M$. Applying the Atiyah-Singer index theorem we thus obtain a  cohomological formula for\/ $\ind\B_{c\f}$. We now provide the details of this computation.

Let $N\subset M$ be a hypersurface such that there is an essential support $\mathcal{K}\subset \M$ of $\B_{c\f}$ whose boundary $\p{}\mathcal{K}= \N:=S^1\times{}N$. In particular, the restriction of $\f$ to $\N$ is invertible and satisfies \eqref{E:Calliascond}. Then there are vector bundles $\F_{\N\pm}$ over $\N$ such that 
\[
	\M\times\CC^k \ = \ \F_{\N+}\oplus \F_{\N-},
\]
and the restriction of $\f$ to $\F_{\N+}$ (respectively, $\F_{N-}$) is positive definite (respectively, negative definite).

\begin{corollary}\label{C:evencase}
Under the conditions of Theorem~\ref{T:sfToeplitz=Callias} assume that $\dim{}M$ is even. Let $j:N\hookrightarrow M$ be a hypersurface such that there is an essential support $\mathcal{K}\subset \M$ of $\B_{c\f}$ whose boundary $\p{}\mathcal{K}= \N:=S^1\times{}N$. Then 
\begin{equation}\label{E:sfToeplitz=cohom}
	\spf(T_{f_t}^+)\ - \  \spf(T_{f_t}^-)\ = \ 
	\int_N\,\Big[\,
	j^*\hat{A}(M)\,j^*\ch(E/S)\,\pi_{2*}\ch(\F_{\N+})\,\Big].
\end{equation}
Here $\hat{A}(M)$ is the differential form representing $\hat{A}$-class of $M$, $\ch(E/N)$ is the differential form representing the {\em graded relative Chern character} of $E$, cf. \cite[p.~146]{BeGeVe}, and 
\[
	\pi_{2*}\ch(\F_{\N+})\ = \ \int_{S^1}\,\ch(\F_{\N+})
	\ \in \ \Omega^\b(N)
\]
is the push-forward of $\ch(\F_{\N+})$ under the map $\pi_2:\N\to N$.   
\end{corollary}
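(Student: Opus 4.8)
The plan is to feed Theorem~\ref{T:sfToeplitz=Callias} into the Callias-type index theorem and then into Atiyah--Singer. By Theorem~\ref{T:sfToeplitz=Callias} we have $\spf(T_{f_t}^+)-\spf(T_{f_t}^-)=\ind\B_{c\f}$, so everything reduces to a cohomological computation of $\ind\B_{c\f}$. Since $\dim M$ is even, $\dim\M=\dim M+1$ is odd, and by construction (Section~\ref{SS:CalliasSxM}, in particular \eqref{E:Calliascond}) the operator $\B_{c\f}=\D+ic\MM_\f$ is a Callias-type operator with essential support $\mathcal{K}$ whose boundary is the closed even-dimensional manifold $\N=S^1\times N$. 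The Callias-type index theorem of \cite{Anghel93Callias,Bunke95} then expresses $\ind\B_{c\f}$ as the index of the Dirac operator induced on $\N$: Clifford multiplication by the conormal of $\N$ in $\M$ (which is the pullback of the conormal of $N$ in $M$) turns $\E|_\N$ into a \emph{graded} Dirac bundle over $\N$, and, since $\f|_\N$ is invertible, the relevant operator is this induced Dirac operator \emph{coupled to the positive eigenbundle} $\F_{\N+}$ of $\f|_\N$. Thus $\ind\B_{c\f}=\ind D_\N^{+}$, where $D_\N^{+}$ is the chiral half of that twisted Dirac operator on the closed manifold $\N$.

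Next I would apply the Atiyah--Singer index theorem to $D_\N^{+}$, obtaining
\[
	\ind\B_{c\f}\ =\ \int_\N\,\hat{A}(\N)\,\ch(\E|_\N/S)\,\ch(\F_{\N+}),
\]
where $\ch(\E|_\N/S)$ denotes the relative (twisting) Chern character of the Dirac bundle $\E|_\N$, cf. \cite[p.~146]{BeGeVe}. It then remains to rewrite the integrand on $\N=S^1\times N$. Because $TS^1$ is trivial, $\hat{A}(\N)=\pi_2^*\hat{A}(N)$; because $\E=\pi_2^*E$ and the extra Clifford generator $\c(dt)$ carries no twisting curvature, $\ch(\E|_\N/S)=\pi_2^*j^*\ch(E/S)$; and because $N\subset M$ is a real hypersurface, so its normal line bundle contributes trivially to the $\hat{A}$-form, $\hat{A}(N)=j^*\hat{A}(M)$. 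Substituting these and applying the projection formula for the fiber integration $\pi_{2*}$ along the $S^1$-factor — under which only $\ch(\F_{\N+})$ is not a pullback from $N$ — we get
\[
	\ind\B_{c\f}\ =\ \int_N\Big[\,j^*\hat{A}(M)\,j^*\ch(E/S)\,\pi_{2*}\ch(\F_{\N+})\,\Big],
\]
which, combined with Theorem~\ref{T:sfToeplitz=Callias}, is precisely \eqref{E:sfToeplitz=cohom}.

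The main obstacle is the careful invocation of the Callias-type index theorem in this ungraded-on-$\M$, matrix-twisted setting: one must check that $\B_{c\f}$ satisfies the hypotheses of \cite{Anghel93Callias,Bunke95} (this is already built into \eqref{E:Calliascond}) and, more delicately, correctly identify the induced boundary operator together with its $\ZZ_2$-grading, so that its twisting bundle is exactly $\F_{\N+}$ and its underlying Dirac data is the restriction to $\N$ of that on $\M$. Once this identification is secured, the manipulations of characteristic forms on $\N=S^1\times N$ (the splitting of $\hat{A}$, the behaviour of the relative Chern character under pullback and restriction, the vanishing of the normal-bundle contribution) and the fiber integration are routine.
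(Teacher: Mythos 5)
Your proposal is correct and follows essentially the same route as the paper: first Theorem~\ref{T:sfToeplitz=Callias}, then the Callias-type index theorem identifying $\ind\B_{c\f}$ with the index of the boundary Dirac operator on $\N$ twisted by $\F_{\N+}$ and graded by $i\c(v)$, then Atiyah--Singer together with the splitting of the characteristic forms along $\N=S^1\times N$ and fiber integration. The one point you flag as delicate but do not carry out --- that the grading on $\E_\N$ induced by $i\c(v)$ differs from the one inherited from $E=E^+\oplus E^-$, yet gives the same relative Chern character because $e\mapsto e\pm ic(v)e$ identifies the two graded Dirac bundles --- is exactly the step the paper spells out before concluding $\ch(E_N/S_N)=j^*\ch(E/S)$.
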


\begin{proof}
Let $E_N$ denote the restriction of the bundle $E$ to $N$. Then  $\E_\N:=\pi_2^*E_N$ is the restriction of $\E$ to $\N$. It is naturally a  Dirac bundle over $\N$. We denote by $\D_\N$ the induced Dirac-type operator on $\E_\N\otimes \F_{\N+}$.

Let $v$ denote the unit normal vector to $\N$ pointing towards $\mathcal{K}$. Then  $i\c(v):\E_\N\to \E_\N$ is an involution. We denote by $\E_\N^{\pm1}$ the eigenspace of $i\c(v)$ with eigenvalue $\pm1$. This defines a grading $\E_\N= \E_\N^{+1}\oplus\E_\N^{-1}$  on the Dirac bundle $\E_N$. 
The operator $\D_\N$ is odd with respect to the induced grading on $\E_\N\otimes \F_{\N+}$.  (This grading is different from the one induced by the grading on $E$. Note that the operator $\D_\N$  is not an odd operator with respect to the grading induced by the grading on $E$).

By the Callias-type index theorem \cite{Anghel93Callias,Bunke95} (see also \cite[\S2.6]{BrCecchini17} where more details are provided)
\begin{equation}\label{E:indB=indD}
		\ind \B_{c\f}\ = \ \ind \D_{\N}.
\end{equation}

Since $\D_\N$ is an operator on compact manifold $\N$ its index is given by the Atiyah-Singer index theorem. Combining it with \eqref{E:indB=indD} we obtain
\begin{equation}\label{E:indB=integral}
	\ind \B_{c\f}\ = \ 
	\int_\N\,\hat{A}(\N)\,\ch(\F_{\N+})\,\ch(\E_\N/\mathcal{S}_\N), 
\end{equation}
where $\hat{A}(\N)$ is the $\hat{A}$-genus of $\N$, $\ch(\F_{\N+})$ is the Chern character of $\F_{\N+}$, and $\ch(\E_\N/\mathcal{S}_\N)$ is the {\em relative Chern character} of the graded bundle $\E_\N$, cf. \cite[p.~146]{BeGeVe}.

Since all the structures are trivial along $S^1$, 
\begin{equation}\label{E:pistar}
		\hat{A}(\N)\ = \ \pi_2^*\hat{A}(N), \qquad
	\ch(\E_\N/\mathcal{S})\ = \ \pi_2^*\ch(E_N/S_N)
\end{equation}
where $\hat{A}(N)$ is the $\hat{A}$-genus of $N$ and $\ch(E_N/S_N)$ is the relative Chern character of the graded bundle $E_N= E_N^{+1}\oplus{}E_\N^{-1}$. 

Since $\hat{A}(N)$ is  a characteristic class it behaves naturally with respect to the pull-backs, i.e., $\hat{A}(N)=j^*\hat{A}(M)$. Combining with \eqref{E:pistar} we obtain
\begin{equation}\label{E:restrictinAhat}
	\hat{A}(\N)\ = \ \pi_2^*j^*\hat{A}(M).
\end{equation}

As for $\E_\N$, the grading  $E_N= E_N^{+1}\oplus{}E_\N^{-1}$ is different from the grading $E_N=E_N^+\oplus{}E_N^-$ inherited from the grading on $E$. However,  since $ic(v)$ is odd with respect to the grading $E_N=E_N^+\oplus{}E_N^-$ and $c(v)^2=-1$, we have
\[
	E_N^{\pm1}\ = \  \big\{\,
		e\pm ic(v)e:\,e\in E_N^+\,\big\}.
\]
It follows that  $E_N^+\oplus{}E_N^-$ and $E_N^{+1}\oplus{}E_N^{-1}$ are isomorphic as graded Dirac bundles. Hence, we can compute $\ch(E_N/S_N)$ using the grading $E_N=E_N^+\oplus{}E_N^-$. Even though the relative Chern character is not quite a characteristic class (it depends not only on the connection but also on the Clifford action and the Riemannian metric) it is well known that it behaves naturally under restrictions to a submanifold
\[
	\ch(E_N/S_N)\ =\  j^*\ch(E/S),
\]
see, for example, \cite[Lemma~7.1]{BrMaschler17}. Thus, using  \eqref{E:pistar} we now obtain
\begin{equation}\label{E:restrtictionChern}
	\ch(\E_\N/\mathcal{S})\ = \ \pi_2^*j^*\ch(E/S).
\end{equation}

The equality \eqref{E:sfToeplitz=cohom} follows now from \eqref{E:sfToeplitz=Callias}, \eqref{E:indB=integral}, \eqref{E:restrictinAhat}, and \eqref{E:restrtictionChern}.
\end{proof}

\section{A family of Toeplitz operators on a strongly pseudoconvex domain}\label{S:pseudoconvex}

In this section we apply Theorem~\ref{T:sfToeplitz=Callias} to the case when $M$ is a strongly pseudoconvex domain in $\CC^k$. Our computation of the spectral flow in this case is similar to the computation of index in \cite{BoutetdeMonvel78} and \cite{GuentnerHigson96}.

\subsection{A Dirac operator on a strongly pseudoconvex domain}\label{SS:Diracpseudoconvex}

Let $M$ be a strongly pseudoconvex domain in $\CC^n$. We denote its boundary by $N:=\p{}\oM$. Let $g^M$ be the Bergman metric on $M$, cf. \cite[\S7]{Stein72book}.  Then $(M,g^M)$ is a complete K\"ahler manifold. We define $E= \Lambda^{n,\bullet}(T^*M)$  and set $E^+= \Lambda^{n,\even}(T^*M)$, $E^-= \Lambda^{n,\odd}(T^*M)$. Then $E$ is naturally a Dirac bundle over $M$ whose space of smooth section coincides with the Dolbeault complex $\Omega^{n,\b}(M)$ of $M$ with coefficients in the canonical bundle $K=\Lambda^{n,0}(T^*M)$. Moreover, the corresponding Dirac operator is given by 
\[
	D\ = \ \op \ + \ \op^*, 
\]
where $\op$ is the Dolbeault differential and $\op^*$ its adjoint with respect to the $L^2$-metric induced by the Bergman metric on $M$. 

By \cite[\S5]{DonnellyFefferman83} zero is an isolated point of the spectrum of $D$ and $\HH:=\ker D$ is a subset of the space $(n,0)$
\begin{equation}\label{E:kernelBergman}
	\HH\ := \ \ker D\ \subset \ \Omega^{n,0}(M).
\end{equation}
In particular, Assumption~\ref{A:1} is satisfied and $\HH^-=\{0\}$.

\subsection{A family of Toeplitz operators}\label{SS:ToeplitzDomain}
Let  $C^\infty(\oM;k)$ denote the space of smooth functions on $\oM$ with values in the space of complex-valued self-adjoint $k\times{}k$ matrices. Each $f\in C^\infty(\oM;k)$ induces a function on $(M,g^M)$ which we also denote by $f$. One readily sees that $f\in C^\infty_g(M,k)$, cf. \cite[\S1, Lemma~2]{GuentnerHigson96}. Then $f$ is invertible at infinity iff $f|_{\p M}$ is invertible. 

We now consider the product $\overline{\M}:= S^1\times\oM$. This is a compact  manifold with boundary $\N:=S^1\times{N}$. We endow the interior $\M:=S^1\times{}M$  of $\overline{\M}$ with the product of the standard metric on $S^1$ and the Bergman metric on $M$. Let
$C^\infty(\overline{\M};k)$ denote the space of smooth functions on $\overline{\M}$ with values in the space of self-adjoint $k\times{}k$-matrices. For $\f\in C^\infty(\overline{\M})$, set $f_t(x):= \f(t,x)$ ($t\in S^1,\ x\in \oM$). Then the restriction of $f_t$ to $\M$ (which is also denoted by $f_t$) is a smooth family of functions in $C^\infty_g(M,k)$. Let $T_{f_t}$ be the corresponding family of Toeplitz operators on $M$. By \eqref{E:kernelBergman}, the space $\HH^-=\{0\}$. Hence, $T_{f_t}= T_{f_t}^+$, while $T_{f_t}^-=0$. 

Assume now that the restriction of $\f$ to $\N=S^1\times{\p\oM}$ is invertible. Then $f_t$ are invertible at infinity  for all $t\in S^1$.  Assumption~\ref{A:2} is automatically satisfied in this case, since $\frac{\p}{\p t} f_t$ is a continuous function on $\overline{\M}$.

As we will see in the next section the following theorem generalizes the bulk-edge correspondence in the theory of topological insulators. More precisely, in the case when $M$ is the unit disc in $\CC$, the left hand side of \eqref{E:sfpseudconvex} is equal to the edge index, while  itsthe write hand side is the bulk index.

\begin{theorem}[Generalized bulk-edge correspondence]\label{T:sfpseudoconvex}
Let $M\subset \CC^k$ be a strongly pseudoconvex domain with smooth boundary $N:=\p M$.  Set\/ $\overline{\M}:=S^1\times\oM$ and let\/ $\f\in  C^\infty_g(\overline{\M};k)$ be a smooth function with values in the space of self-adjoint $k\times{}k$-matrices. Assume that $f_t(x):= \f(t,x)$ is invertible for all $t\in S^1, \ x\in \p{}M$. Then 
\begin{equation}\label{E:sfpseudconvex}
	\spf (T_{f_t}) \ = \ 
	\int_{\N}\,\ch(\F_{\N+}),
\end{equation}
where $\N:=S^1\times{N}$ and  $\F_{\N+}\subset \N\times\CC^k$ is a subbundle spanned by eigenvectors of $\f|_\N$ with positive eigenvalues.
\end{theorem}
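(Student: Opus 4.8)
The plan is to deduce this theorem from the general result Theorem~\ref{T:sfToeplitz=Callias} together with the cohomological reformulation in Corollary~\ref{C:evencase}, and then to simplify the cohomological integrand using the special geometry of the Dolbeault–Dirac setup on a pseudoconvex domain. First I would check that all hypotheses of Theorem~\ref{T:sfToeplitz=Callias} hold: Assumption~\ref{A:1} and the vanishing $\HH^-=\{0\}$ follow from \cite[\S5]{DonnellyFefferman83} as recorded in \eqref{E:kernelBergman}; the condition $f_t\in C^\infty_g(M;k)$ is \cite[\S1, Lemma~2]{GuentnerHigson96}; invertibility at infinity of each $f_t$ is equivalent to invertibility of $\f|_\N$ by the paragraph preceding the theorem; and Assumption~\ref{A:2} holds since $\tfrac{\p}{\p t}f_t$ extends continuously to the compact manifold $\overline{\M}$. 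Since $\HH^-=\{0\}$ we get $\spf(T_{f_t}^-)=0$, so the left-hand side of \eqref{E:sfpseudconvex} equals $\spf(T_{f_t}^+)-\spf(T_{f_t}^-)$.

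Next, since $\dim_{\RR}M=2n$ is even, Corollary~\ref{C:evencase} applies with the hypersurface $N=\p M$ itself (taking the essential support $\mathcal K$ to be a collar-bounded region of $\M$ with $\p\mathcal K=\N=S^1\times N$; one must verify \eqref{E:Calliascond} holds near the boundary, which is automatic since $\f|_\N$ is invertible and the collar can be shrunk). Thus
\[
	\spf(T_{f_t})\ = \ \int_N\Big[\,j^*\hat A(M)\; j^*\ch(E/S)\;\pi_{2*}\ch(\F_{\N+})\,\Big].
\]
The heart of the argument is then to show that, for the Dolbeault–Dirac bundle $E=\Lambda^{n,\b}(T^*M)$ on the strongly pseudoconvex domain, the factor $j^*\hat A(M)\,j^*\ch(E/S)$ contributes trivially — i.e. equals $1$ in the relevant degree — so that the integral collapses to $\int_N \pi_{2*}\ch(\F_{\N+})=\int_\N \ch(\F_{\N+})$, which is exactly the right-hand side of \eqref{E:sfpseudconvex}. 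The mechanism is the standard one behind Boutet de Monvel's index formula: for the $\op+\op^*$ operator twisted by the canonical bundle, the full symbol class $\hat A(M)\ch(E/S)$ equals the Todd-class/Chern-character combination $\Td(TM)\,\ch(K)\cdots$ which, after using that $K=\Lambda^{n,0}$, telescopes so that on the boundary hypersurface $N$ the surviving contribution in top degree is just the fundamental class. Concretely I would invoke the identity $\hat A(M)\,\ch(E/S)=\Td(T^{1,0}M)$ for the Dolbeault–Dirac bundle on a Kähler manifold (see \cite[Ch.~3]{BeGeVe}), and then observe that since $N$ is an odd-dimensional ($2n-1$) compact manifold and $\F_{\N+}$ is pulled essentially from the boundary data, the product with $j^*\Td(T^{1,0}M)$ integrates to the same number as $\int_\N\ch(\F_{\N+})$; the cleanest way is to note that $\Td(T^{1,0}M)$ restricted to the boundary and paired against the push-forward $\pi_{2*}\ch(\F_{\N+})$ reduces, by the contact/CR structure of $N$ and dimension count, to the degree-zero term of $\Td$, which is $1$.

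The main obstacle I anticipate is precisely this last cohomological simplification — making rigorous that the $\hat A(M)\ch(E/S)$ factor drops out. There are two routes: either a direct differential-form computation on the collar of $N$ exploiting that the CR-normal direction trivializes one Clifford factor (so the relative Chern character of $E_N$ reduces to that of a bundle built from $T^{1,0}N$, whose top-degree Todd contribution vanishes by a parity/dimension argument), or, more conceptually, rederiving \eqref{E:sfpseudconvex} by mimicking Boutet de Monvel's original argument \cite{BoutetdeMonvel78} in the family setting: realize $\ind\B_{c\f}$ directly as the index of a twisted $\op$-complex on $\overline{\M}$ with Atiyah–Patodi–Singer-type boundary conditions determined by $\F_{\N+}$, for which the interior integral vanishes (the domain is contractible onto a compact Stein-like piece after the cutoff) and only the boundary $\eta$-type term survives, equalling $\int_\N\ch(\F_{\N+})$. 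I would present the first route as the primary argument, citing \cite[Lemma~7.1]{BrMaschler17} and \cite[p.~146]{BeGeVe} for the naturality and explicit form of the relative Chern character, and remark that the second route reproves the classical Boutet de Monvel index theorem as the case $\f$ constant in $t$.
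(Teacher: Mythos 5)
Your overall route is the paper's: verify Assumptions~\ref{A:1} and~\ref{A:2}, note $f_t\in C^\infty_g(M;k)$ and the equivalence of invertibility at infinity with invertibility of $\f|_\N$, use $\HH^-=\{0\}$ to kill $\spf(T_{f_t}^-)$, and feed a boundary-parallel hypersurface into Corollary~\ref{C:evencase}. All of that is correct and matches the paper.

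The gap is precisely in the step you yourself flag as the ``main obstacle'': eliminating the factor $j^*\hat{A}(M)\,j^*\ch(E/S)$. Your primary argument --- that by ``the contact/CR structure of $N$ and dimension count'' only the degree-zero term of $\Td$ survives the pairing with $\pi_{2*}\ch(\F_{\N+})$ --- is not a proof. The form $\pi_{2*}\ch(\F_{\N+})$ has components in every even degree up to $2n-2$, and each of these can pair with a complementary-degree component of the Todd form on the $(2n-1)$-dimensional cycle $N$; no dimension or parity count excludes these cross terms. The correct reason, and the one the paper uses, is a one-liner: $M$ is an \emph{open subset of} $\CC^n$, so $TM$ is a trivial complex bundle, and therefore the de Rham classes of $\hat{A}(M)$ and of $\ch(E/S)$ are both equal to $1$ (for $E=\Lambda^{n,\b}(T^*M)$ one has $\hat{A}(M)\ch(E/S)=\Td(T^{1,0}M)\,\ch(K)$ --- your identity omits the canonical-bundle twist, though this does not matter here since every characteristic class of the trivial bundle $TM$ is trivial). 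Since $\pi_{2*}\ch(\F_{\N+})$ is closed, integrating over $N$ only sees these cohomology classes, so the integrand collapses to $\pi_{2*}\ch(\F_{\N+})$ and \eqref{E:sfpseudconvex} follows. Your fallback route --- rerunning Boutet de Monvel's analysis with Atiyah--Patodi--Singer-type boundary conditions in the family setting --- would be a substantial undertaking and is not needed; the flatness observation closes the argument immediately.
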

\begin{proof}
By \eqref{E:kernelBergman}, the space $\HH^-=\{0\}$. Hence, $T_{f_t}= T_{f_t}^+$, while $T_{f_t}^-=0$. In particular, 
\begin{equation}\label{E:sfTf-}
	\spf(T_{f_t}^-)\ = \ 0.
\end{equation}

Since $\M$ is a domain in a flat space $\CC^k$, both $\hat{A}(\M)=1$ and $\ch(E/S)=1$. Hence, by \eqref{E:sfToeplitz=cohom} and \eqref{E:sfTf-} we obtain 
\[
	\spf (T_{f_t}) \ = \ \int_N\,\pi_{2*}\ch(\F_{\N+}) \ = \
	\int_{\N}\,\ch(\F_{\N+}).
\]
\end{proof}

\section{A tight-binding model of topological insulators and the bulk-edge correspondence}\label{S:Graf-Porta}

In this section we briefly review a standard tight-binding model for two-dimensional topological insulators, following the description in \cite{Hayashi16} (see also \cite{GrafPorta13}) and show that the bulk-edge correspondence for this model follows immediately from our Theorem~\ref{T:sfpseudoconvex}.

\subsection{The bulk Hamiltonian}\label{SS:bulkGrafPorta}
We consider considered a {\em tight binding model} on the lattice $\ZZ_{\ge0}\times\ZZ$. Basically, this means that the 
electrons can only stay on the lattice sites and the kinetic energy is included by allowing electrons to hop from one site to a neighboring one. Surprisingly, this model covers not only crystals with square lattice, but many other materials, including the hexagon lattice of graphene, cf. Section~3 of \cite{GrafPorta13}. 

The mathematical formulation of the model is as follows (we present the version suggested in \cite{Hayashi16}): The  ``bulk" state space is the space $l^2(\ZZ\times\ZZ,\CC^k)$ of square integrable sequences 
\[
	\phi\ = \ \big\{\phi_{ij}\big\}_{(i,j)\in\ZZ\times\ZZ}\,,
	\qquad \phi_{ij}\in \CC^k.
\]
The {\em bulk Hamiltonian} $H:l^2(\ZZ\times\ZZ,\CC^k)\to l^2(\ZZ\times\ZZ,\CC^k)$ is periodic with period one in both directions of the lattice. By performing a Fourier transform  of $H$ (the {\em Bloch decomposition} in  physics terminology) we obtain a family of self-adjoint $k\times k$-matrices $H(s,t)$ ($(s,t)\in S^1\times S^1$). {\em We assume that $H(s,t)$ depend smoothly on $s$ and $t$}.

{\em We assume that the bulk Hamiltonian has a spectral gap at Fermi level $\mu\in \RR$}, i.e. there exists $\epsilon>0$ such that the spectrum of $H(s,t)$ does not intersect the interval $(\mu-\epsilon,\mu+\epsilon)$ for all $(s,t)\in S^1\times S^1$. In particular, the operator $H(s,t)-\mu$ is invertible  for all $(s,t)\in S^1\times S^1$. Thus the trivial bundle $(S^1\times{}S^1)\times\CC^k$ over the torus $S^1\times{}S^1$ decomposes into the direct sum of subbundles 
\[
	(S^1\times{}S^1)\times\CC^k\ = \ \F_+\oplus \F_-
\]
such that the restriction of $H(s,t)-\mu$ to $\F_+$  is positive definite and the restriction of  $H(s,t)-\mu$ to $\F_-$ is negative define. The bundle $\F_+$ is referred to as the {\em Bloch bundle}.

\begin{definition}\label{D:bulkindex}
The {\em bulk index} of the Hamiltonian $H$ is 
\begin{equation}\label{E:bulkindex}
	\Ib\ := \ \int_{S^1\times S^1}\,\ch(\F_+).
\end{equation}
\end{definition}

\subsection{The edge Hamiltonian}\label{SS:edgeGrafPorta}
We now take the boundary into account. The ``edge" state space is the space  $l^2(\ZZ_{\ge0}\times\ZZ,\CC^k)$ of square integrable sequences of vectors in $\CC^k$ on the half-lattice $\ZZ_{\ge0}\times\ZZ$. 

The Fourier transform of the bulk Hamiltonian $H:l^2(\ZZ\times\ZZ,\CC^k)\to l^2(\ZZ\times\ZZ,\CC^k)$ in the direction ``along the edge" transforms $H$ into a family of self-adjoint translationally invariant operators
\begin{equation}\label{E:H(t)}
	H(t):\,l^2(\ZZ,\CC^k)\ \to \ l^2(\ZZ,\CC^k).
\end{equation}
Then $H(t)$ depends smoothly on $t\in S^1$. Let $\Pi:l^2(\ZZ,\CC^k)\to l^2(\ZZ_{\ge0},\CC^k)$ denote the projection.

\begin{definition}\label{D:edgeHamiltonian}
The {\em edge Hamiltonian} is the family of Toeplitz operators
\begin{equation}\label{E:egdeHamiltonian}
	H^\#(t)\  := \ \Pi\circ H(t)\circ \Pi:\,
	l^2(\ZZ_{\ge0},\CC^k)\ \to \ l^2(\ZZ_{\ge0},\CC^k),
	\qquad t\in S^1.
\end{equation}
\end{definition}

\begin{definition}\label{D:edgeindex}
The {\em edge index} $\Ie$ of  the Hamiltonian $H$ is the spectral flow of the edge Hamiltonian
\begin{equation}\label{E:edgeindex}
	\Ie\ := \ \spf\big(H^\#(t)\big). 
\end{equation}
\end{definition}
\begin{theorem}[Bulk-edge correspondence]\label{T:bulkedge}
\(\displaystyle
	\Ib\ = \ \Ie.
\)
\end{theorem}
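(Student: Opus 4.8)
The plan is to realize the tight-binding model as the special case of Theorem~\ref{T:sfpseudoconvex} in which $M$ is the open unit disc $B\subset\CC$, equipped with the Bergman (hyperbolic) metric. The disc is the simplest strongly pseudoconvex domain, so Theorem~\ref{T:sfpseudoconvex} applies. The key point is a dictionary: the ``edge" circle of the lattice model, obtained by Fourier transform along the boundary direction $\ZZ$, is the boundary circle $N=\p B=S^1$; the family parameter $t\in S^1$ of the Toeplitz operators is the circle used in forming $\overline{\M}=S^1\times\oB$; and the ``bulk" torus $S^1\times S^1$ is exactly $\N=S^1\times N$.

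\medskip

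First I would set up the identification of Hilbert spaces. By the Bloch decomposition along the edge, $H$ becomes the family $H(t):l^2(\ZZ,\CC^k)\to l^2(\ZZ,\CC^k)$, and the half-space restriction gives the Toeplitz family $H^\#(t)=\Pi H(t)\Pi$ on $l^2(\ZZ_{\ge0},\CC^k)$. On the analytic side, the Bergman space $\HH=\ker D\subset\Omega^{1,0}(B)$ of the disc is (after trivializing the canonical bundle) the classical Hardy space $H^2(S^1)$, whose orthonormal basis $\{z^m\}_{m\ge0}$ matches $l^2(\ZZ_{\ge0})$ via $z^m\leftrightarrow e_m$. Tensoring with $\CC^k$, the projection $P:L^2\to\HH$ becomes exactly the Hardy projection $\Pi$. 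Now define $\f\in C^\infty(\overline{\M};k)$ by taking, for each $t$, the function on $\oB$ whose boundary values on $S^1=\{e^{i\theta}\}$ are $f_t(e^{i\theta}):=H(t)$ viewed through the Bloch variable $s=e^{i\theta}$ — more precisely $\f(t,e^{i\theta})-\mu H(s,t)$ is arranged so that the restriction of $\f$ to $\N=S^1\times S^1$ equals $H(s,t)-\mu$ (after a smooth extension of $H(s,t)-\mu$ from the boundary circle into the disc, which exists since the matrices are Hermitian and the value in the interior is irrelevant to all conclusions — only invertibility on $\N$ matters, and that is the spectral-gap hypothesis). Under this identification the Toeplitz operator $T_{f_t}$ on the Bergman/Hardy space is precisely $\Pi(H(t)-\mu)\Pi=H^\#(t)-\mu$, and since shifting a self-adjoint family by a constant $\mu\cdot\id$ does not change its spectral flow, $\spf(T_{f_t})=\spf(H^\#(t))=\Ie$.

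\medskip

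Second I would read off the right-hand side. By construction the eigenbundle decomposition $\N\times\CC^k=\F_{\N+}\oplus\F_{\N-}$ of $\f|_\N$ coincides, under $\N=S^1\times S^1$, with the spectral decomposition $(S^1\times S^1)\times\CC^k=\F_+\oplus\F_-$ of $H(s,t)-\mu$. Hence $\F_{\N+}=\F_+$ as bundles over the torus, so
\[
	\int_{\N}\ch(\F_{\N+})\ = \ \int_{S^1\times S^1}\ch(\F_+)\ = \ \Ib.
\]
Combining with $\spf(T_{f_t})=\Ie$ and Theorem~\ref{T:sfpseudoconvex} gives $\Ie=\Ib$.

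\medskip

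\textbf{Main obstacle.} The genuinely delicate step is the Hilbert-space identification $P\leftrightarrow\Pi$ and $T_{f_t}\leftrightarrow H^\#(t)-\mu$: one must verify that the Bergman-space Toeplitz operator with symbol $f_t$ (a function on $\oB$, smooth up to the boundary, with prescribed boundary values) agrees, up to unitary equivalence and the harmless shift by $\mu$, with the discrete half-line Toeplitz operator $\Pi H(t)\Pi$. This is essentially the classical fact that Bergman-space Toeplitz operators with boundary-smooth symbols are, modulo compacts, unitarily equivalent to Hardy-space Toeplitz operators with the boundary-restricted symbol, and that the latter on $H^2(S^1)\otimes\CC^k\cong l^2(\ZZ_{\ge0},\CC^k)$ is the matrix Toeplitz operator built from the Fourier coefficients of $H(s,t)$ — i.e. exactly $\Pi H(t)\Pi$. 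Since spectral flow is a homotopy/stable invariant insensitive to compact perturbations of a self-adjoint Fredholm family, the ``modulo compacts" ambiguity is immaterial. Everything else is bookkeeping with the dictionary $s\leftrightarrow$ boundary of the disc, $t\leftrightarrow$ suspension circle.
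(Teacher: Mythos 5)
Your proposal is correct and follows essentially the same route as the paper: realize the model on the unit disc with the Bergman metric, identify $\ker D$ with $l^2(\ZZ_{\ge0},\CC^k)$ via the monomials $z^j\,dz$, observe that the Bergman-space Toeplitz family and the discrete family $H^\#(t)$ differ by a continuous family of compact operators (the paper cites Coburn's theorem for precisely the ``main obstacle'' you isolate), invoke the invariance of spectral flow under compact perturbations, and apply Theorem~\ref{T:sfpseudoconvex} together with the identification $\F_{\N+}=\F_+$. The one imprecision is your claim that subtracting $\mu\cdot\id$ ``does not change the spectral flow'' --- a constant shift does change the spectral flow through $0$ in general; what is meant (and what the paper's conventions implicitly assume) is that the edge index is the spectral flow of $H^\#(t)$ through the Fermi level $\mu$, which is by definition the spectral flow of $H^\#(t)-\mu$ through $0$.
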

\begin{proof}
Consider the unit disc $\oB:= \{z\in \CC:\, |z|\le 1\}$. This is a strongly pseudoconvex domain in $\CC$. We view the bulk Hamiltonian $H(s,t)$ as a function on $\p\oB\times{}S^1$ with values in the set $\Herm(k)$ of invertible Hermitian $k\times{}k$-matrices. 

We endow $B$ with the Bergman metric and consider the Dolbeault-Dirac  operator $D=\op+\op^*$ on the space $L^2\Omega^{1,\b}(B,\CC^k)=  L^2\Omega^{1,0}(B,\CC^k)\oplus L^2\Omega^{1,1}(B,\CC^k)$ of square-integrable $(1,\b)$-forms on $B$. Let $\HH:= \ker D$. Then $\HH\subset L^2\Omega^{1,0}(B,\CC^k)$. Let $P:L^2\Omega^{1,\b}(B,\CC^k)\to \HH$ denote the orthogonal projection. As usual, for $f\in C^\infty_b(\oB,k)$ we denote by $T_f:=P\circ M_f:\HH\to \HH$ the Toeplitz operator. 

To each $u= \{u_j\}_{j\ge0}\in l^2(\ZZ_{\ge0},\CC^k)$ we associate a 1-form
\[
	\phi(f)\ := \ \sum_{j\ge0}\,u_j\,z^j\,dz\ \in \ \HH.
\]
If $A:l^2(\ZZ,\CC^k)\to l^2(\ZZ,\CC^k)$ is a translationally invariant operator, then $\phi\circ{A}\circ\phi^{-1}$ is the multiplication operator  by the {\em Fourier transform $A(s)$ of $A$}. Let $a:\oB\to \Herm(k)$ be a continuous extension of $A(s)$ to $\oB$ (i.e, we assume that $a|_{\p\oB}(s)=A(s)$) and let 
\[
 	T_a\ := \ P\circ M_{a(s)}:\,\HH\ \to\ \HH
\]
be the corresponding Toeplitz operator. 

We also define a different Toeplitz operator  
\[
	A^\#\ :=\ \Pi\circ H(t)\circ\Pi:l^2(\ZZ_{\ge0},\CC^k)\ \to \ l^2(\ZZ_{\ge0},\CC^k)
\] 
associated with $A$ (notice that \eqref{E:egdeHamiltonian} is a special case of this construction). Those two Toeplitz operators are closely related. In particular, it is proven in \cite{Coburn73} that the difference \[
	T_a\ - \ \phi\circ A^\#\circ \phi^{-1}
\] 
is compact. We now apply this result to the family of operators $H(t):l^2(\ZZ_{\ge0},\CC^k)\ \to \ l^2(\ZZ_{\ge0},\CC^k)$. Let $h:\oB\times{}S^1\to \Herm(k)$ be a continuous extension of $H(s,t)$ and set $h_t(s):= h(s,t)$. Then 
\[
	T_{h_t}\ - \ \phi\circ H^\#(t)\circ \phi^{-1}:\,\HH\ \to \HH, 
	\qquad t\in S^1,
\]
is a continuous family of compact operators. It follows, \cite[Proposition~1.12]{BoossWojciechowski85} (see also \cite[Proposition~17.6]{BoosWoj93book}), that
\[
	\spf(T_{h_t})\ = \ \spf\big(\, \phi\circ H^\#(t)\circ \phi^{-1}\,\big)
	\ = \ \spf\big(H^\#(t)\big).
\]
The theorem follows now from definitions of bulk and edge indexes, \eqref{E:bulkindex}, \eqref{E:edgeindex}, and Theorem~\ref{T:sfpseudoconvex}.
\end{proof}

\section{Proof of Theorem~\ref{T:sfToeplitz=Callias}}\label{S:prsfToeplitz=Callias}

The proof of Theorem~\ref{T:sfToeplitz=Callias} consists of two steps. First (Lemma~\ref{L:sf=ind}) we apply a result of Atiyah, Patodi and Singer \cite[Th.~7.4]{APS3} (see also \cite{RobbinSalomon95}) to conclude that the spectral flow $\spf(T_{f_t})$ is equal to the index of a certain operator on $\M$. Then, using the argument similar to \cite{Bunke00} we show that the latter index is equal to the index of $\B_{c\f}$.

\subsection{Functions with value in $\HH$}\label{SS:L2H}
We view the space $L^2(S^1,\HH)$ of square integrable functions with values in $\HH$ as a subspace of $L^2(\M,\E\otimes\CC^k)$. Then the family  $T_{f_t}$ naturally induces an operator $L^2(S^1,\HH)\to L^2(S^1,\HH)$ which we still denote by $T_{f_t}$. Let $T_{f_t}^\pm$ denote the restriction of $T_{f_t}$ to $L^2(S^1,\HH^\pm)$. 

\subsection{The spectral flow as an index}\label{SS:sf=index}
Atiyah, Patodi and Singer, \cite[Th.~7.4]{APS3}, proved that the spectral flow of a periodic family of elliptic differential operators $A(t)$ ($t\in S^1)$ is equal to the index of the operator $\p/\p t-A$ on $S^1$. Robbin and Salomon \cite{RobbinSalomon95} extended this equality to a much more general family of operators. Applying this result to our situation we immediately obtain

\begin{lemma}\label{L:sf=ind}
Under the assumptions of Theorem~\ref{T:sfToeplitz=Callias} we have
\begin{equation}\label{E:sf=ind}
	\spf(T_{f_t}^\pm)\ = \  
	\ind\,\big(\frac{\p}{\p t}-T_{f_t}^\pm\big)\Big|_{L^2(S^1,\HH^\pm)}.
\end{equation}
\end{lemma}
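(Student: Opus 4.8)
\textbf{Proof proposal for Lemma~\ref{L:sf=ind}.}

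The plan is to invoke the abstract spectral-flow-equals-index theorem of Atiyah--Patodi--Singer \cite[Th.~7.4]{APS3} in the generalized form of Robbin--Salomon \cite{RobbinSalomon95}, and the only real work is to verify that the family $t\mapsto T_{f_t}^\pm$ satisfies the hypotheses of that theorem. First I would recall precisely what those hypotheses are: one needs a (say, separable) Hilbert space $\mathcal{H}_0$ — here $\HH^\pm$ — and a family $A(t)=T_{f_t}^\pm$ of self-adjoint Fredholm operators on $\mathcal{H}_0$, all with a common domain (here all of $\HH^\pm$, since each $T_{f_t}^\pm=P^\pm M_{f_t}^\pm$ is bounded), such that $t\mapsto A(t)$ is (norm-)continuously differentiable and $2\pi$-periodic, and such that the resulting operator $\frac{\p}{\p t}-A(t)$ on $L^2(S^1,\mathcal{H}_0)$, with domain $H^1(S^1,\mathcal{H}_0)$, is Fredholm; the conclusion is then exactly $\spf(A(t))=\ind\big(\frac{\p}{\p t}-A(t)\big)$.

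Next I would check each hypothesis in turn. Self-adjointness of $T_{f_t}^\pm$ is immediate because $f_t$ is Hermitian-matrix-valued and $P^\pm$ is an orthogonal projection, so $P^\pm M_{f_t}^\pm$ restricted to $\HH^\pm$ is self-adjoint. Fredholmness of each $T_{f_t}^\pm$ (for every fixed $t$) is Proposition~\ref{P:Fredholm}, since $f_t\in C^\infty_g(M;k)$ is invertible at infinity and $D$ satisfies Assumption~\ref{A:1}; the restriction to $\HH^\pm$ is Fredholm because $T_{f_t}$ is block-diagonal with respect to $\HH=\HH^+\oplus\HH^-$ (as $f_t$ commutes with the grading operator — note $f_t$ acts only on the $\CC^k$ factor and $P$ preserves the grading). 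Boundedness and $2\pi$-periodicity of $T_{f_t}^\pm$ in $t$ are clear, and $C^1$-dependence in operator norm follows from Assumption~\ref{A:2}: since $\big\|\frac{\p}{\p t}f_t(x)\big\|<C_2$ uniformly in $x$, the multiplication operators $M_{\frac{\p}{\p t}f_t}$ are uniformly bounded and depend continuously on $t$, whence $\frac{\p}{\p t}T_{f_t}^\pm=P^\pm M_{\frac{\p}{\p t}f_t}^\pm$ exists in operator norm and is norm-continuous.

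The one genuinely delicate point — and the step I expect to be the main obstacle — is the Fredholmness of the suspended operator $\frac{\p}{\p t}-T_{f_t}^\pm$ on $L^2(S^1,\HH^\pm)$, which is the hypothesis that makes the index on the right of \eqref{E:sf=ind} well-defined. In the Robbin--Salomon framework this Fredholmness is not automatic from pointwise Fredholmness of the $A(t)$; one needs, e.g., that the family has ``no spectral flow at infinity,'' which in the self-adjoint Fredholm case is guaranteed because $0$ is not in the essential spectrum of any $A(t)$ and the essential spectra vary continuously with $t$ — so there is a uniform gap $\delta>0$ with $\spec_{\mathrm{ess}}(T_{f_t}^\pm)\cap(-\delta,\delta)=\emptyset$ for all $t$. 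I would establish this uniform essential-spectral gap from the invertibility-at-infinity of the family together with Assumption~\ref{A:2} (which controls the $t$-derivative uniformly, hence prevents the essential spectrum from drifting across $0$), using the compactness of $S^1$ to pass from pointwise gaps to a uniform one. Given that, \cite[Th.~7.4]{APS3} / \cite{RobbinSalomon95} applies verbatim and yields \eqref{E:sf=ind}. I would carry these out in the order: (1) recall the abstract theorem and its hypotheses; (2) verify self-adjointness, boundedness, periodicity, and $C^1$-regularity; (3) verify pointwise Fredholmness via Proposition~\ref{P:Fredholm} and the block structure; (4) prove the uniform essential-gap to get Fredholmness of $\frac{\p}{\p t}-T_{f_t}^\pm$; (5) quote the theorem to conclude.
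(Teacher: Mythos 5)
Your proposal is correct and follows exactly the paper's route: the paper's proof of Lemma~\ref{L:sf=ind} consists precisely of citing \cite[Th.~7.4]{APS3} together with its generalization in \cite{RobbinSalomon95} and noting that the result applies "immediately" to the family $T_{f_t}^\pm$. The only difference is that you spell out the verification of the hypotheses (self-adjointness, pointwise Fredholmness via Proposition~\ref{P:Fredholm}, $C^1$-regularity from Assumption~\ref{A:2}, and the uniform essential gap giving Fredholmness of $\frac{\p}{\p t}-T_{f_t}^\pm$), which the paper leaves implicit.
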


Notice that, since $\spf(T_{f_t}^\pm)=-\spf(-T_{f_t}^\pm)$, equality \eqref{E:sf=ind} is equivalent to 
\begin{equation}\label{E:sf=-ind}
	\spf(T_{f_t}^\pm)\ = \  
	-\,\ind\,\big(\frac{\p}{\p t}+T_{f_t}^\pm\big)\Big|_{L^2(S^1,\HH^\pm)}.
\end{equation}

\subsection{Harmonic sections on $\M$}\label{SS:Htorus}
Let $\H\subset L^2(\M,\E\otimes\CC^k)$ denote the kernel of $\D$ and let $\P:L^2(\M,\E\otimes\CC^k)\to \H$ be the orthogonal projection. 

We denote by $P_0:L^2(S^1)\to L^2(S^1)$ the orthogonal projection onto the subspace of constant functions. Then  with respect to decomposition \eqref{E:L2=L2timesL2} we have $\P=P_0\otimes P$. 

To simplify the notation in the computations below we write $P_0$ for $P_0\otimes1$, $Q_0$ for $(1-P_0)\otimes1$, and $P$ for $1\otimes P$. Then   the space $L^2(S^1,\HH)$ coincides with the image of the projection 
\[
	P:\, L^2(\M,\E\otimes\CC^k)\ \to\  L^2(\M,\E\otimes\CC^k).
\]
It follows that the projections $\P$ and $\Q:=1-\P$ preserve the space $L^2(S^1,\HH)$ and their restrictions this space are given by 
\begin{equation}\label{E:projectionsH}
	\P|_{L^2(S^1,\HH)}\ = \ P_0, \qquad 
	\Q|_{L^2(S^1,\HH)}\ = \ Q_0.
\end{equation}

\begin{lemma}\label{L:commutatorcompact}
The operator 
\begin{equation}\label{E:commutatorcompact}
 	[P,\M_\f]\ := \ P\circ \MM_\f\ - \ \MM_\f\circ P:\,
 	L^2(\M,\E\otimes\CC^k)\ \to \  L^2(\M,\E\otimes\CC^k)
 \end{equation} 
is compact.
\end{lemma}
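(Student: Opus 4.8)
The plan is to reduce the compactness of $[P,\MM_\f]$ to the compactness of the Toeplitz commutators $[P, M_{f_t}]$ on $M$, combined with the fact that convolution-type operators in the $S^1$ variable against a smooth (hence rapidly decaying in Fourier modes) family of multipliers are well-behaved. First I would decompose everything in Fourier modes along $S^1$. Write $\MM_\f = \sum_{n\in\ZZ} e^{int}\otimes M_{\hat f_n}$, where $\hat f_n\in C^\infty_g(M;k)$ are the Fourier coefficients of the family $t\mapsto f_t$; since $\f$ is smooth in $t$ with values in a fixed Banach space of functions, the coefficients $\hat f_n$ decay faster than any power of $n$ in the relevant norms. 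With respect to the decomposition \eqref{E:L2=L2timesL2}, the projection $\P = P_0\otimes P$ and its complement $\Q = 1-\P$ act diagonally in the Fourier variable: on the $m$-th mode $\P$ acts as $P$ if $m=0$ and as $0$ otherwise. Hence $[P,\MM_\f]$, with $P=1\otimes P$, decomposes as a sum over $n$ of operators sending the $m$-th mode to the $(m+n)$-th mode, each given by the single-manifold Toeplitz commutator $[P, M_{\hat f_n}]$ on $M$ tensored with the shift $m\mapsto m+n$ on $L^2(S^1)$.

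The key input is that each $[P, M_{\hat f_n}]$ is compact on $L^2(M,E\otimes\CC^k)$. This is precisely the content of the proof of Proposition~\ref{P:Fredholm} (i.e.\ \cite[Lemma~2.6]{Bunke00}): under Assumption~\ref{A:1}, the projection $P$ onto $\ker D$ has the property that $[P, M_g]$ is compact whenever $g\in C^\infty_g(M;k)$, because $dg$ vanishes at infinity and $P$ is built from a parametrix for the operator with a spectral gap at $0$. (Note: here I do not even need invertibility at infinity of $\hat f_n$ — only that $d\hat f_n$ vanishes at infinity, which holds since each $\hat f_n\in C^\infty_g(M;k)$.) So I would state and use this as: for every $g\in C^\infty_g(M;k)$, $[P,M_g]$ is compact, citing the argument of \cite[Lemma~2.6]{Bunke00}.

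Assembling the pieces: the $n$-th term of $[P,\MM_\f]$ is $S_n\otimes [P,M_{\hat f_n}]$ where $S_n$ is the unitary shift $e^{int}$ on $L^2(S^1)$, hence has norm $\big\|[P,M_{\hat f_n}]\big\|_{\mathrm{op}}$, and is compact since $[P,M_{\hat f_n}]$ is. The partial sums $\sum_{|n|\le N} S_n\otimes[P,M_{\hat f_n}]$ are therefore compact operators on $L^2(\M,\E\otimes\CC^k)$, and they converge in operator norm to $[P,\MM_\f]$ because $\sum_n \big\|[P,M_{\hat f_n}]\big\|_{\mathrm{op}} \le \sum_n \|M_{\hat f_n}\|\cdot\text{(const)} \le 2\sum_n \|\hat f_n\|_\infty < \infty$ by the rapid decay of the Fourier coefficients of the smooth family $\f$ (boundedness of $\MM_\f$, already used in the text, is the crude version of this). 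A norm-convergent sum of compact operators is compact, so $[P,\MM_\f]$ is compact.

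The main obstacle I anticipate is purely bookkeeping: making precise in which norm the Fourier coefficients $\hat f_n$ decay rapidly, and checking that the compactness estimate for $[P,M_g]$ coming from \cite{Bunke00} is uniform enough (it is, since it only uses $\|dg\|_\infty$ near infinity and $\|g\|_\infty$, both controlled) to let the series $\sum_n [P,M_{\hat f_n}]$ converge in operator norm. Everything else is a soft tensor-product argument. An alternative, cleaner route avoiding Fourier modes entirely is to write $[P,\MM_\f]=\P\MM_\f\Q - \Q\MM_\f\P$ using $\P=P$ on the relevant reductions, and then use that $\MM_\f$ maps the harmonic sections $L^2(S^1,\HH)$ into a space where $\D$ has a gap away from finitely much, invoking Rellich-type compactness of the inclusion of the graph norm of $\D$; but the Fourier-mode reduction to Bunke's lemma is the most transparent and self-contained, so that is the approach I would carry out.
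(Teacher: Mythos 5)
Your reduction to the fiberwise commutators $[P,M_{\hat f_n}]$ on $M$ is sound, and the appeal to the argument of \cite[Lemma~2.6]{Bunke00} for their compactness is the right input. The proof nevertheless breaks at the assembling step: the operator $S_n\otimes[P,M_{\hat f_n}]$ (with $S_n$ the multiplication by $e^{int}$ on $L^2(S^1)$) is \emph{not} compact on $L^2(S^1)\otimes L^2(M,E\otimes\CC^k)$ unless $[P,M_{\hat f_n}]=0$. A unitary --- or the identity --- on an infinite-dimensional tensor factor tensored with a nonzero compact operator is never compact: already for the $n=0$ term, the vectors $e^{imt}\otimes v$ tend weakly to zero as $m\to\infty$ while their images $e^{imt}\otimes[P,M_{\hat f_0}]v$ keep constant norm. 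This is not a bookkeeping issue that sharper Fourier decay would repair. Taking $\f$ independent of $t$ (allowed by all the hypotheses) gives $[P,\MM_\f]=1\otimes[P,M_f]$, which is compact on the Hilbert space $L^2(\M,\E\otimes\CC^k)$ only if $[P,M_f]=0$; so no argument can establish the statement in the literal Hilbert-space sense. What your decomposition does prove --- and this is the genuinely useful content --- is that $[P,\MM_\f]$ is a norm limit of finite sums $\sum_{|n|\le N}S_n\otimes K_n$ with each $K_n$ compact, equivalently that $t\mapsto[P,M_{f_t}]$ is a norm-continuous family of compact operators on $L^2(M,E\otimes\CC^k)$. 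That is exactly compactness of $[P,\MM_\f]$ as an endomorphism of the Hilbert $C(S^1)$-module $C(S^1,L^2(M,E\otimes\CC^k))$, which is the form of the lemma invoked in van den Dungen's appendix and is what the subsequent arguments actually need.

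For comparison, the paper's own proof runs differently: it writes $P=1\otimes P$ as the contour integral $\frac{1}{2\pi i}\int_{\partial B}R_{1\otimes D}(\lambda)\,d\lambda$ and uses $[R_{1\otimes D}(\lambda),\MM_\f]=R_{1\otimes D}(\lambda)\,\c(d_x\f)\,R_{1\otimes D}(\lambda)$ together with Rellich's lemma. Be aware, though, that it leans on the same delicate point at the claim that $\c(d_x\f)\,R_{1\otimes D}(\lambda)$ is compact: since $R_{1\otimes D}(\lambda)=1\otimes R_D(\lambda)$ gains regularity only in the $M$-directions and none in $t$, that operator too is only fiberwise compact. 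Your Fourier-mode decomposition in fact isolates the difficulty more transparently than the resolvent argument does; in both approaches the cure is to interpret ``compact'' in the Hilbert-module (norm-continuous family of compacts) sense. A separate, minor point: Assumption~\ref{A:2} alone gives only $\|\hat f_n\|_\infty=O(1/|n|)$, so the summability $\sum_n\|\hat f_n\|_\infty<\infty$ genuinely requires the smoothness of the family in $t$ (two uniformly bounded $t$-derivatives suffice), which you do implicitly invoke.
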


\begin{proof}
The proof of Lemma~2.4 of \cite{Bunke00} extends to our situation almost without any changes. We present it here for completeness. 

By Assumption~\ref{A:1} there exists a small ball $B\subset \CC$ about 0 which does not contain non-zero points of the spectrum of $1\otimes{}D$. To simplify the notation we write $D$ for $1\otimes{}D$. 

For $\lambda$ not in the spectrum of $D$, 
let $R_D(\lambda):=(\lambda-D)^{-1}$ denote the resolvent.
By functional calculus we have
\[
	P\ = \ \frac{1}{2\pi i}\, \int_{\partial B}\, R_D(\lambda)\, d\lambda.
\]
Let $d_x\f:= df_t(x)$ be the differential of $\f$ along $M$ (so that $d\f= d_x\f+\frac{\p \f}{\p t}dt$). Then we have
\[
	[R_D(\lambda),\MM_\f] \ = \ R_D(\lambda)\,[D,\MM_\f]\,R_D(\lambda)
	\ = \ 
	R_D(\lambda)\,\c(d_x\f)\, R_D(\lambda).
\]
From Rellich's Lemma and the fact that $d_x\f$ vanishes at infinity we conclude that $c(d_x\f) R_D(\lambda)$ is compact.  Hence $[R_D(\lambda),\MM_\f]$ is also compact. It follows that 
\[
	[P,\MM_\f]\ =\ \frac{1}{2\pi i}\, 
	  \int_{\partial B}\, [R_D(\lambda),\MM_\f]\: d\lambda
\]
is compact.
\end{proof}

\begin{corollary}\label{C:PMQcompact}
The operators $P\circ \MM_\f\circ Q$ and\/ $Q\circ \MM_\f\circ P$ are compact.
\end{corollary}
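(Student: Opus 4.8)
The plan is to derive Corollary~\ref{C:PMQcompact} directly from Lemma~\ref{L:commutatorcompact} by algebraic manipulation using the fact that $P$ is an orthogonal projection, hence idempotent and self-adjoint, and likewise $Q=1-P$.

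First I would write, using $P^2=P$ and $PQ=QP=0$,
\[
	P\circ\MM_\f\circ Q\ =\ P\,(\MM_\f\circ Q)\ =\ P\,\big(\MM_\f\circ Q - Q\circ\MM_\f\big)\ +\ P\circ Q\circ\MM_\f,
\]
and since $P\circ Q=0$ the last term vanishes, so
\[
	P\circ\MM_\f\circ Q\ =\ P\circ\big(\MM_\f\circ Q - Q\circ\MM_\f\big)\ =\ -\,P\circ[Q,\MM_\f]\ =\ P\circ[P,\MM_\f],
\]
where in the last equality I used $[Q,\MM_\f]=[1-P,\MM_\f]=-[P,\MM_\f]$. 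By Lemma~\ref{L:commutatorcompact} the operator $[P,\MM_\f]$ is compact, and the composition of a bounded operator with a compact operator is compact; hence $P\circ\MM_\f\circ Q$ is compact. The statement for $Q\circ\MM_\f\circ P$ follows symmetrically by the same computation with the roles of $P$ and $Q$ interchanged (or simply by taking adjoints: $(P\circ\MM_\f\circ Q)^*=Q\circ\MM_\f\circ P$ since $\MM_\f$ is self-adjoint and $P,Q$ are self-adjoint, and the adjoint of a compact operator is compact).

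There is essentially no obstacle here — the only point to be slightly careful about is that $\MM_\f$ is a bounded operator (which holds because $\f$ is bounded, being a smooth function on the compact $S^1$-direction and bounded on $M$) so that the compositions above make sense and compactness is preserved under composition. I would present this as a two-line proof referencing Lemma~\ref{L:commutatorcompact}.
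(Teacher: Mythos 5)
Your proof is correct and follows essentially the same route as the paper: both deduce the statement from the compactness of the commutator $[P,\MM_\f]$ in Lemma~\ref{L:commutatorcompact} by elementary projection algebra (the paper writes $P\circ\MM_\f\circ Q-Q\circ\MM_\f\circ P=[P,\MM_\f]$ and separates the two terms using orthogonality of the ranges, while you obtain $P\circ\MM_\f\circ Q=P\circ[P,\MM_\f]$ directly and handle the second operator by taking adjoints). The difference is purely cosmetic, and your version is a perfectly acceptable substitute.
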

\begin{proof}
The operator
\[
	P\circ \MM_\f\circ Q\ - \ Q\circ  \MM_\f\circ P \ = \ 
	P\circ \MM_\f \ - \ \MM_\f\circ P
\]
is compact by Lemma~\ref{L:commutatorcompact}. Since the range of $P$ is orthogonal to the range of $Q$, it follows that the operators $P\circ 
\MM_\f\circ\Q$ and $\Q\circ \MM_\f\circ\P$ are compact.
\end{proof}

\begin{lemma}\label{L:indddt=indBf}
Under the assumptions of Theorem~\ref{T:sfToeplitz=Callias} we have
\begin{equation}\label{E:indddt=indTf}
	 \ind \B_{c\f}\ = \ 
	 \ind\,\big(\frac{\p}{\p t}-T_{f_t}^+\big)\Big|_{L^2(S^1,\HH^+)}
	 \ + \ 
	 \ind\,\big(\frac{\p}{\p t}+T_{f_t}^-\big)\Big|_{L^2(S^1,\HH^-)}.
\end{equation}
\end{lemma}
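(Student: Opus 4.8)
The plan is to connect the Callias-type operator $\B_{c\f}$ on $\M$ to the operators $\frac{\p}{\p t}\mp T_{f_t}^\pm$ on $L^2(S^1,\HH^\pm)$ via a homotopy and a decomposition with respect to the projections $\P$ and $\Q$ introduced in \S\ref{SS:Htorus}. First I would recall that $\B_{c\f}=\D+ic\,\MM_\f$ is Fredholm, so its index is invariant under continuous deformations through Fredholm operators. The natural homotopy is to let $c\to\infty$: on the range of $\Q=1-\P$ (the orthocomplement of the harmonic sections) the operator $\D$ is invertible with a spectral gap, and rescaling $ic\,\MM_\f$ against it pushes the ``off-diagonal'' part of the operator to become negligible, so in the limit $\B_{c\f}$ should decouple into a block that is invertible on $\operatorname{range}\Q$ and a block on $\operatorname{range}\P=L^2(S^1,\HH)$ that carries the whole index. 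This is precisely the mechanism used in \cite{Bunke00}, and Corollary~\ref{C:PMQcompact} (that $P\,\MM_\f\,Q$ and $Q\,\MM_\f\,P$ are compact) is what makes the off-diagonal terms harmless: they are compact perturbations and do not affect the index.

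Next I would compute the surviving block. On $L^2(S^1,\HH)$ we have, by \eqref{E:projectionsH}, that $\P$ acts as $P_0$ and $\Q$ as $Q_0$; and by \eqref{E:tilD} together with the fact that $\D$ annihilates $\HH$ fiberwise, the restriction of $\D$ to $L^2(S^1,\HH)$ is just $\c(dt)\frac{\p}{\p t}$, which with respect to the grading $\HH=\HH^+\oplus\HH^-$ is $\begin{pmatrix} i\frac{\p}{\p t}&0\\0&-i\frac{\p}{\p t}\end{pmatrix}$. Similarly $P\,\MM_\f\,P$ restricted to $L^2(S^1,\HH^\pm)$ is exactly the family of Toeplitz operators $T_{f_t}^\pm$. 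Hence the compressed operator $P\,\B_{c\f}\,P$ on $L^2(S^1,\HH)$ is, up to the positive constant $c$ and a unitary rescaling, block-diagonal with blocks $i\frac{\p}{\p t}+icT_{f_t}^+$ on $L^2(S^1,\HH^+)$ and $-i\frac{\p}{\p t}+icT_{f_t}^-$ on $L^2(S^1,\HH^-)$. Multiplying by the invertible constant $-i/c$ (which does not change the index) gives $\frac{\p}{\p t}-T_{f_t}^+$ on the plus part and $\frac{\p}{\p t}+T_{f_t}^-$ on the minus part, and the index is additive over the direct sum. This yields \eqref{E:indddt=indTf}.

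The main obstacle is the rigorous justification of the large-$c$ decoupling: one must show that for $c$ large enough the operator $\B_{c\f}$, or a suitable homotopy of it, is conjugate (modulo compacts, or modulo a Fredholm homotopy) to the direct sum $\Q\B_{c\f}\Q \oplus \P\B_{c\f}\P$, and that the $\Q$-block is invertible. The invertibility of the $\Q$-block follows from the essential-support estimate \eqref{E:Calliascond} combined with the spectral gap of $\D$ on $\operatorname{range}\Q$: away from the compact essential support the term $c^2\MM_\f^2$ dominates, and on the compact part the gap of $\D$ dominates once $c$ is large; this is exactly the Callias-type invertibility-at-infinity argument of \cite{Anghel93Callias,Bunke95}. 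For the decoupling itself, I would write $\B_{c\f}=\P\B_{c\f}\P+\Q\B_{c\f}\Q+\P\B_{c\f}\Q+\Q\B_{c\f}\P$ and observe that the cross terms are $i c\,(\P\MM_\f\Q+\Q\MM_\f\P)$ plus the $\D$-cross terms; the $\MM_\f$ cross terms are compact by Corollary~\ref{C:PMQcompact}, and the $\D$ cross terms vanish because $\D$ commutes with $\P$. Therefore the cross terms are compact and can be deleted without changing the index, giving $\ind\B_{c\f}=\ind(\P\B_{c\f}\P|_{L^2(S^1,\HH)})+\ind(\Q\B_{c\f}\Q|_{\operatorname{range}\Q})$, and the second summand vanishes by invertibility. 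Combined with the block computation of the first summand above, this completes the proof.
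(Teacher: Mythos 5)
Your overall architecture is the same as the paper's: split $\B_{c\f}$ along the projection onto $L^2(S^1,\HH)$ and its complement, use Corollary~\ref{C:PMQcompact} to discard the off-diagonal $\MM_\f$-terms (the $\D$-cross terms vanish since $\D$ commutes with the projection), identify the compressed block with the suspension $\c(dt)\frac{\p}{\p t}+ic\,T_{f_t}$ of the Toeplitz family, and argue that the complementary block contributes zero index. One notational slip matters: the decomposition must be taken with respect to $P=1\otimes P$ (range $L^2(S^1,\HH)$), not with respect to $\P=P_0\otimes P$ (range $\H=\ker\D$) as you announce at the outset. On $\IM(1-\P)$ the operator $1\otimes D$ still annihilates all non-constant $\HH$-valued functions, so the $\Q$-block of the deformed operator would not be invertible at the endpoint of the homotopy and the argument would not close; your later formulas show you mean $1\otimes P$, but the two projections are genuinely different here.

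The real gap is the $Q$-block. You claim $Q\B_{c\f}Q|_{\IM Q}$ is \emph{invertible} because ``away from the essential support $c^2\MM_\f^2$ dominates, and on the compact part the gap of $\D$ dominates once $c$ is large.'' This is backwards: on the compact part the perturbation $ic\,Q\MM_\f Q$ and the commutator $c\,\c(d_x\f)$ grow linearly in $c$ while the gap of $\D$ on $\IM Q$ is a fixed constant, so enlarging $c$ destroys rather than secures the domination; moreover $Q\MM_\f^2Q\neq(Q\MM_\f Q)^2$, so even the off-support estimate needs care. Invertibility is neither justified nor needed. What the lemma requires, and what the paper proves, is only that the index of this block vanishes: introduce the homotopy $\B_{c\f,u}=1\otimes D+u\bigl(\c(dt)\frac{\p}{\p t}+ic\MM_\f\bigr)$, note that for $u\in(0,1]$ the Callias estimate \eqref{E:Calliascond} still holds (so $\B_{c\f,u}$ is Fredholm and, the cross terms being compact, so is each diagonal block), and that at $u=0$ the $Q$-block is $QDQ|_{\IM Q}$, invertible by Assumption~\ref{A:1}; homotopy invariance then gives index zero. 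Your opening suggestion of letting $c\to\infty$ plays no role and should be dropped. Finally, tighten the sign bookkeeping in the block computation: from your own displayed blocks, $-\frac{i}{c}\bigl(i\frac{\p}{\p t}+icT^+_{f_t}\bigr)=\frac1c\frac{\p}{\p t}+T^+_{f_t}$, not $\frac{\p}{\p t}-T^+_{f_t}$; the relative minus sign between the $\HH^+$ and $\HH^-$ contributions comes precisely from $\c(dt)=\operatorname{diag}(i,-i)$ together with \eqref{E:sf=ind}--\eqref{E:sf=-ind}, and must be traced through consistently.
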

\begin{proof}
Set 
\[
	A\ := \ \c(dt)\,\frac{\p}{\p t}\ + \ i\,c\,\MM_\f.
\]
Then $\B_{c\f}=1\otimes D+A$. Consider a one parameter family of operators
\[
	\B_{c\f,u}\ := \ 1\otimes D\ + \ u\,A,
	\qquad 0\le u\le 1.
\]
It follows from \ref{A:2} and the vanishing of $d_x\f$ at infinity that for all $u>0$ the operator $\B_{c\f,u}$ satisfies the Callias-condition \eqref{E:Calliascond} and, hence, is Fredholm. 

Since $D$ and $\frac{\p}{\p t}$ commute with $P$ and $Q$
\[
	 P\circ \B_{c\f,u}\circ Q\ = \ u\,c\, P\circ \MM_\f\circ Q, 
	 \qquad 
	 Q\circ  \B_{c\f,u}\circ P \ = \  u\,c\,Q\circ  \MM_\f\circ P.
\]
Thus these operators are compact by Corollary~\ref{C:PMQcompact}. It follows that 
\begin{equation}\label{E:indB=diag}
	\ind \B_{c\f,u}\ = \ 
	\ind P\circ \B_{c\f,u}\circ P|_{\IM P}\ + \ 
	\ind Q\circ \B_{c\f,u}\circ Q|_{\IM Q}.
\end{equation}

The operator $Q\circ \B_{c\f,0}\circ Q|_{\IM Q}= Q\circ D\circ Q|_{\IM Q}$ is invertible. Hence, it  is Fredholm and its index is equal to 0. We conclude that  
$Q\circ \B_{c\f,u}\circ Q|_{\IM Q}$ ($0\le u\le 1$) is a continuous family of Fredholm operators with 
\[
	\ind Q\circ \B_{c\f,u}\circ Q|_{\IM Q}\ = \ 0.
\]
From \eqref{E:indB=diag} we now obtain 
\begin{multline}\notag
	\ind \B_{c\f}\ = \ \ind \B_{c\f,1} \ = \ 
	\ind P\circ \B_{c\f,1}\circ P|_{\IM P}
	\\ = \ 
	\ind 
	  P^+\circ\big(i\frac{\p}{\p t}-ic\M_\f\big)\circ P^+\big|_{\IM P^+}
	\ + \ 
	\ind 
	  P^-\circ\big(-i\frac{\p}{\p t}-ic\M_\f\big)\circ P^-\big|_{\IM P^-}
	\\ = \ 
	\ind \big(\frac{\p}{\p t}-T_{f_t}^+\big)\big|_{L^2(S^1,\HH^+)}
	\ + \ 
	\ind \big(\frac{\p}{\p t}+T_{f_t}^-\big)\big|_{L^2(S^1,\HH^-)}.
\end{multline}
\end{proof}

\subsection{Proof of Theorem~\ref{T:sfToeplitz=Callias}}\label{SS:prsfToeplitz=Callias}
Theorem~\ref{T:sfToeplitz=Callias} follows now from \eqref{E:sf=ind}, \eqref{E:sf=-ind}, and \eqref{E:indddt=indTf}.\hfill$\square$

\begin{bibdiv}
\begin{biblist}

\bib{Anghel93Callias}{article}{
      author={Anghel, N.},
       title={On the index of {C}allias-type operators},
        date={1993},
        ISSN={1016-443X},
     journal={Geom. Funct. Anal.},
      volume={3},
      number={5},
       pages={431\ndash 438},
         url={http://dx.doi.org/10.1007/BF01896237},
}

\bib{APS3}{article}{
      author={Atiyah, M.~F.},
      author={Patodi, V.~K.},
      author={Singer, I.~M.},
       title={Spectral asymmetry and {R}iemannian geometry. {III}},
        date={1976},
     journal={Math. Proc. Cambridge Philos. Soc.},
      volume={79},
      number={1},
       pages={71\ndash 99},
}

\bib{BaumDouglas82}{incollection}{
      author={Baum, P.},
      author={Douglas, R.~G.},
       title={{$K$}\ homology and index theory},
        date={1982},
   booktitle={Operator algebras and applications, {P}art {I} ({K}ingston,
  {O}nt., 1980)},
      series={Proc. Sympos. Pure Math.},
      volume={38},
   publisher={Amer. Math. Soc., Providence, R.I.},
       pages={117\ndash 173},
      review={\MR{679698}},
}

\bib{BeGeVe}{book}{
      author={Berline, N.},
      author={Getzler, E.},
      author={Vergne, M.},
       title={Heat kernels and {Dirac} operators},
   publisher={Springer-Verlag},
        date={1992},
}

\bib{BoossWojciechowski85}{article}{
      author={Booss, B.},
      author={Wojciechowski, K.},
       title={Desuspension of splitting elliptic symbols. {I}},
        date={1985},
        ISSN={0232-704X},
     journal={Ann. Global Anal. Geom.},
      volume={3},
      number={3},
       pages={337\ndash 383},
         url={https://doi.org/10.1007/BF00130485},
      review={\MR{813137}},
}

\bib{BoosWoj93book}{book}{
      author={Boo{\ss}-Bavnbek, B.},
      author={Wojciechowski, K.~P.},
       title={Elliptic boundary problems for {D}irac operators},
      series={Mathematics: Theory \& Applications},
   publisher={Birkh\"auser Boston, Inc., Boston, MA},
        date={1993},
        ISBN={0-8176-3681-1},
         url={http://dx.doi.org/10.1007/978-1-4612-0337-7},
}

\bib{BoutetdeMonvel78}{article}{
      author={Boutet~de Monvel, L.},
       title={On the index of {T}oeplitz operators of several complex
  variables},
        date={1978/79},
        ISSN={0020-9910},
     journal={Invent. Math.},
      volume={50},
      number={3},
       pages={249\ndash 272},
         url={https://doi.org/10.1007/BF01410080},
      review={\MR{520928}},
}

\bib{BrCecchini17}{article}{
      author={Braverman, M.},
      author={Cecchini, S.},
       title={Callias-type operators in von {N}eumann algebras},
        date={2018},
        ISSN={1559-002X},
     journal={The Journal of Geometric Analysis},
      volume={28},
      number={1},
       pages={546\ndash 586},
         url={https://doi.org/10.1007/s12220-017-9832-1},
}

\bib{BrMaschler17}{article}{
      author={Braverman, M.},
      author={Maschler, G.},
       title={Equivariant {A}{P}{S} index for dirac operators of non-product
  type near the boundary},
        date={201702},
     journal={arXiv preprint arXiv:1702.08105, to appear in Indiana University
  Mathematics Journal},
         url={https://arxiv.org/abs/1702.08105},
}

\bib{Bunke95}{article}{
      author={Bunke, U.},
       title={A {$K$}-theoretic relative index theorem and {C}allias-type
  {D}irac operators},
        date={1995},
        ISSN={0025-5831},
     journal={Math. Ann.},
      volume={303},
      number={2},
       pages={241\ndash 279},
         url={http://dx.doi.org/10.1007/BF01460989},
      review={\MR{1348799 (96e:58148)}},
}

\bib{Bunke00}{incollection}{
      author={Bunke, U.},
       title={On the index of equivariant {T}oeplitz operators},
        date={2000},
   booktitle={Lie theory and its applications in physics, {III} ({C}lausthal,
  1999)},
   publisher={World Sci. Publ., River Edge, NJ},
       pages={176\ndash 184},
      review={\MR{1888382}},
}

\bib{Coburn73}{article}{
      author={Coburn, L.~A.},
       title={Singular integral operators and {T}oeplitz operators on odd
  spheres},
        date={1973/74},
        ISSN={0022-2518},
     journal={Indiana Univ. Math. J.},
      volume={23},
       pages={433\ndash 439},
         url={https://doi.org/10.1512/iumj.1973.23.23036},
      review={\MR{0322595}},
}

\bib{DaiZhang98}{article}{
      author={Dai, X.},
      author={Zhang, W.},
       title={Higher spectral flow},
        date={1998},
        ISSN={0022-1236},
     journal={J. Funct. Anal.},
      volume={157},
      number={2},
       pages={432\ndash 469},
      review={\MR{1638328}},
}

\bib{DonnellyFefferman83}{article}{
      author={Donnelly, H.},
      author={Fefferman, C.},
       title={{$L^{2}$}-cohomology and index theorem for the {B}ergman metric},
        date={1983},
        ISSN={0003-486X},
     journal={Ann. of Math. (2)},
      volume={118},
      number={3},
       pages={593\ndash 618},
         url={https://doi.org/10.2307/2006983},
}

\bib{ElbauGraf02}{article}{
      author={Elbau, P.},
      author={Graf, G.~M.},
       title={Equality of bulk and edge {H}all conductance revisited},
        date={2002},
        ISSN={0010-3616},
     journal={Comm. Math. Phys.},
      volume={229},
      number={3},
       pages={415\ndash 432},
         url={https://doi-org.ezproxy.neu.edu/10.1007/s00220-002-0698-z},
      review={\MR{1924362}},
}

\bib{GrafPorta13}{article}{
      author={Graf, G.~M.},
      author={Porta, M.},
       title={Bulk-edge correspondence for two-dimensional topological
  insulators},
        date={2013},
        ISSN={0010-3616},
     journal={Comm. Math. Phys.},
      volume={324},
      number={3},
       pages={851\ndash 895},
         url={http://dx.doi.org/10.1007/s00220-013-1819-6},
      review={\MR{3123539}},
}

\bib{GuentnerHigson96}{article}{
      author={Guentner, E.},
      author={Higson, N.},
       title={A note on {T}oeplitz operators},
        date={1996},
        ISSN={0129-167X},
     journal={Internat. J. Math.},
      volume={7},
      number={4},
       pages={501\ndash 513},
         url={http://dx.doi.org/10.1142/S0129167X9600027X},
      review={\MR{1408836}},
}

\bib{Hatsugai93}{article}{
      author={Hatsugai, Yasuhiro},
       title={Chern number and edge states in the integer quantum {H}all
  effect},
        date={1993},
        ISSN={0031-9007},
     journal={Phys. Rev. Lett.},
      volume={71},
      number={22},
       pages={3697\ndash 3700},
         url={https://doi-org.ezproxy.neu.edu/10.1103/PhysRevLett.71.3697},
      review={\MR{1246070}},
}

\bib{Hayashi16}{article}{
      author={Hayashi, S.},
       title={Bulk-edge correspondence and the cobordism invariance of the
  index},
        date={201611},
      eprint={1611.08073},
         url={https://arxiv.org/abs/1611.08073},
}

\bib{KellendonkRichterSB02}{article}{
      author={Kellendonk, J.},
      author={Richter, T.},
      author={Schulz-Baldes, H.},
       title={Edge current channels and {C}hern numbers in the integer quantum
  {H}all effect},
        date={2002},
        ISSN={0129-055X},
     journal={Rev. Math. Phys.},
      volume={14},
      number={1},
       pages={87\ndash 119},
         url={https://doi-org.ezproxy.neu.edu/10.1142/S0129055X02001107},
      review={\MR{1877916}},
}

\bib{LawMic89}{book}{
      author={Lawson, H.~B.},
      author={Michelsohn, M.-L.},
       title={Spin geometry},
   publisher={Princeton University Press},
     address={Princeton, New Jersey},
        date={1989},
}

\bib{ProdanSchulz16book}{book}{
      author={Prodan, E.},
      author={Schulz-Baldes, H.},
       title={Bulk and boundary invariants for complex topological insulators},
      series={Mathematical Physics Studies},
   publisher={Springer, [Cham]},
        date={2016},
        ISBN={978-3-319-29350-9; 978-3-319-29351-6},
         url={https://doi.org/10.1007/978-3-319-29351-6},
        note={From $K$-theory to physics},
}

\bib{RobbinSalomon95}{article}{
      author={Robbin, J.},
      author={Salamon, D.},
       title={The spectral flow and the {M}aslov index},
        date={1995},
        ISSN={0024-6093},
     journal={Bull. London Math. Soc.},
      volume={27},
      number={1},
       pages={1\ndash 33},
         url={https://doi.org/10.1112/blms/27.1.1},
}

\bib{Stein72book}{book}{
      author={Stein, E.~M.},
       title={Boundary behavior of holomorphic functions of several complex
  variables},
   publisher={Princeton University Press, Princeton, N.J.; University of Tokyo
  Press, Tokyo},
        date={1972},
        note={Mathematical Notes, No. 11},
      review={\MR{0473215}},
}

\bib{YuWuXie17bulk-edge}{article}{
      author={Yu, Y.},
      author={Wu, Y.-S.},
      author={Xie, X.},
       title={Bulk-edge correspondence, spectral flow and
  {A}tiyah-{P}atodi-{S}inger theorem for the {$Z_2$}-invariant in topological
  insulators},
        date={2017},
        ISSN={0550-3213},
     journal={Nuclear Phys. B},
      volume={916},
       pages={550\ndash 566},
         url={https://doi.org/10.1016/j.nuclphysb.2017.01.018},
      review={\MR{3611419}},
}

\end{biblist}
\end{bibdiv}

\addresseshere

\newcommand{\KK}{K\!K}
\newcommand{\mT}{\mathcal{T}}
\newcommand{\sA}{\mathscr{A}}
\newcommand{\Ran}{\operatorname{Ran}}

\newcommand{\mattwo}[4]{
  \left(\!\!\!\begin{array}{c@{~}c}#1&#2\\ #3&#4\\\end{array}\!\!\!\right)
}

\vspace{\baselineskip}\vspace{\baselineskip}
\appendix

{{\LARGE{\section*{\textbf{Appendix:\\ A perspective from (unbounded) KK-theory}}}}
\begin{center}
{by {\Large{Koen van den Dungen}}\\ Mathematisches Institut der Universit\"at Bonn, Endenicher Allee 60
D-53115 Bonn}\\ {\em E-mail address}: \texttt{kdungen@uni-bonn.de}
\end{center}
\renewcommand{\thesection}{A}

\setcounter{subsection}{0}
\setcounter{equation}{0}

\vspace{\baselineskip}
We consider the assumptions and notation of Section \ref{S:main}. The aim of this short appendix is to review Theorem \ref{T:sfToeplitz=Callias} from the perspective of (unbounded) $\KK$-theory \cite{Kas80,BJ83}. For simplicity, we will assume that $\f = \{f_t\}_{t\in S^1}$, viewed as an $M_k(\CC)$-valued function on $S^1\times M$, is chosen such that $(1+\f^2)^{-1}$ vanishes at infinity. This assumption ensures that the operator $\MM_\f$ (multiplication by $\f$), acting on the Hilbert $C_0(S^1\times M)$-module $\Gamma_0(S^1\times M,E\otimes\CC^k)$, has compact resolvents, so that $(\CC,\Gamma_0(S^1\times M,E^+\oplus E^-),\MM_\f)$ is an unbounded Kasparov $\CC$-$C_0(S^1\times M)$-module. It also means we do not need the (sufficiently large) constant $c>0$, and we simply set $c=1$. 

Theorem \ref{T:sfToeplitz=Callias} states that we have the equality
\begin{align}
\label{eq:sf_Toeplitz}
\spf(T_{f_t}^+) - \spf(T_{f_t}^-) = \ind \B_{\f} \in \ZZ .
\end{align}

In the context of $\KK$-theory, the right-hand-side of this equality should be viewed as an element in $\KK^0(\CC,\CC)$. 
The left-hand-side naturally defines an element in $\KK^1(\CC,C(S^1))$ (\emph{cf}.\ \cite[\S2.3]{Dun17}), given as the (odd!) class of the regular self-adjoint Fredholm operator 
\[
\mT_{\f} := \mattwo{T_{\f}^+}{0}{0}{-T_{\f}^-}
\]
on the Hilbert $C(S^1)$-module $C(S^1,(\HH^+\oplus\HH^-)\otimes\CC^k)$, where $T_\f^\pm = \{T_\f^\pm(t)\}_{t\in S^1}$ is given by $T_\f^\pm(t) := T_{f_t}^\pm$, and $\HH = \HH^+\oplus\HH^-$ denotes the kernel of $D$. 
Of course, these $\KK$-groups are both isomorphic to $\ZZ$, and we have a natural isomorphism $\cdot \otimes_{C(S^1)} [-i\partial_t] \colon \KK^1(\CC,C(S^1)) \to \KK^0(\CC,\CC)$ (which sends the spectral flow of a family $A(t)$ to the index of $\partial_t-A$, as described in \S\ref{SS:sf=index}). Thus we rewrite Eq.\ \eqref{eq:sf_Toeplitz} as (\emph{cf.}\ Eq.\ \eqref{L:indddt=indBf})
\[
\mT_{\f} \otimes_{C(S^1)} [-i\partial_t] = \ind \B_{\f} \in \KK^0(\CC,\CC) .
\]
Now let us consider the right-hand-side of this equality. It is well understood that the index class of the Callias-type operator $\B_{\f} = \D + i \MM_\f$ is given by the Kasparov product $[\B_{\f}] = [\MM_\f] \otimes_{C_0(S^1\times M)} [\D]$, cf.\ \cite{Bun95}. 
The class of $\D$ is simply given as the exterior Kasparov product $[\D] = [ D] \otimes [-i\partial_t]$ of the Dirac operator $ D$ on $M$ with $-i\partial_t$ on $S^1$. Using the properties of the Kasparov product, we then obtain
\[
\ind \B_{\f} = [\B_{\f}] = [\MM_\f] \otimes_{C_0(S^1\times M)} \Big( [ D] \otimes [-i\partial_t] \Big) = \Big( [\MM_\f] \otimes_{C_0(M)} [ D] \Big) \otimes_{C(S^1)} [-i\partial_t] .
\]
Since the Kasparov product with $[-i\partial_t]$ gives an isomorphism, Eq.\ \eqref{eq:sf_Toeplitz} can be rewritten as
\[
[\mT_{\f}] = [\MM_\f] \otimes_{C_0(M)} [ D] \in \KK^1(\CC,C(S^1)) .
\]
The Kasparov product on the right-hand-side can be computed \cite[Example 2.38]{BMS16}, and is represented by the regular self-adjoint operator (with compact resolvents)
\[
\sA_{\f} := \mattwo{\MM_{\f}^+}{ D^-}{ D^+}{-\MM_{\f}^-}
\]
on the Hilbert $C(S^1)$-module $C(S^1,L^2(M,E^+\oplus E^-))$. 
Theorem \ref{T:sfToeplitz=Callias} can then be reproven by showing the equality $[\mT_\f] = [\sA_\f]$ in $\KK^1(\CC,C(S^1))$.

\begin{proposition}
\label{prop:Braverman_odd}
We have the equality 
\[
[\mT_{\f}] = [\sA_{\f}] \in \KK^1(\CC,C(S^1)) .
\]
\end{proposition}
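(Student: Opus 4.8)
The plan is to realize both $\mathcal{T}_\f$ and $\sA_\f$ as representatives of the same $\KK$-class by constructing an explicit unbounded (or bounded-transform) operator homotopy between them, or equivalently, by exhibiting a third module that maps to both. Recall from Section~\ref{S:prsfToeplitz=Callias} that the operator $\sA_\f$ is, up to the exterior product with $[-i\partial_t]$, essentially $\B_{\f}$ itself, while $\mathcal{T}_\f$ is built from the Toeplitz operators $T_{f_t}^\pm = P^\pm M_{f_t} P^\pm$. The passage from one to the other is exactly the content of Lemma~\ref{L:indddt=indBf}: the off-diagonal blocks $P\MM_\f Q$ and $Q\MM_\f P$ are compact (Corollary~\ref{C:PMQcompact}), and the $Q$-corner is invertible. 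I would translate this operator-theoretic argument into the language of $\KK^1(\CC,C(S^1))$.

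First I would decompose the Hilbert $C(S^1)$-module $C(S^1,L^2(M,E\otimes\CC^k))$ as the direct sum of the submodule $\mathcal{H}_{C(S^1)} := C(S^1,\HH\otimes\CC^k)$ (the fibrewise kernel of $D$) and its orthogonal complement, using the projection $P$; this is a genuine decomposition of Hilbert modules since, by Assumption~\ref{A:1}, $P$ is the spectral projection onto an isolated point and hence has $C(S^1)$-linear, adjointable range. With respect to this splitting, write $\sA_\f$ in $2\times 2$ block form. Second, I would observe that the restriction of $\sA_\f$ to the $(1-P)$-corner, namely $(1-P)\sA_\f(1-P)$, is invertible with bounded inverse (because $D$ restricted to $(\ker D)^\perp$ is invertible and $\MM_\f$ is bounded), so this corner contributes the zero class and can be deformed away; this is the $\KK$-theoretic shadow of the "$\ind Q\B_{c\f,u}Q = 0$" step in the proof of Lemma~\ref{L:indddt=indBf}. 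Third, by Corollary~\ref{C:PMQcompact} the off-diagonal blocks connecting the two corners are compact, hence vanish in the Kasparov-module data up to a compact (equivalently, operator-homotopic) perturbation; so the class of $\sA_\f$ equals the class of its compression $P\sA_\f P$ to $\mathcal{H}_{C(S^1)}$. Finally, on the kernel $\HH = \HH^+\oplus\HH^-$ the operator $D$ acts as zero, so $P\sA_\f P$ reduces to the block-diagonal fibrewise multiplication $\mathrm{diag}(P^+ M_{f_t} P^+,\ -P^- M_{f_t} P^-) = \mathrm{diag}(T_{f_t}^+,\ -T_{f_t}^-) = \mathcal{T}_\f$, which is exactly what we want.

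The main obstacle I expect is justifying at the level of unbounded Kasparov modules (not just Fredholm index) that compactness of the off-diagonal corners together with invertibility of the $(1-P)$-corner lets one discard them: one must either pass to the bounded transform $\sA_\f(1+\sA_\f^2)^{-1/2}$ and argue that the bounded transforms of $\sA_\f$, of the corner-truncated operator, and of $\mathcal{T}_\f \oplus (\text{invertible})$ differ by compacts and are connected by a norm-continuous path of Fredholm operators over $C(S^1)$, or else build an explicit unbounded homotopy $\{\sA_{\f,u}\}$ interpolating the off-diagonal scaling parameter $u$ while maintaining self-adjointness, regularity, and compact resolvents throughout (the analogue of $\B_{c\f,u}$ in Lemma~\ref{L:indddt=indBf}). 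The regularity and compact-resolvent conditions are the delicate points: with the appendix's standing assumption that $(1+\f^2)^{-1}$ vanishes at infinity, $\MM_\f$ has compact resolvents on the relevant module, and since $[D,\MM_\f] = c(d_x\f)$ vanishes at infinity one gets the needed relative-compactness estimates to keep $\sA_{\f,u}$ regular and self-adjoint with compact resolvents for all $u$; I would cite \cite[Example~2.38]{BMS16} and \cite{Bun95} (together with the Kasparov-product machinery already invoked) for the technical framework and then check these bounds directly. Granting this, the chain of equalities above yields $[\mathcal{T}_\f] = [\sA_\f]$ in $\KK^1(\CC,C(S^1))$, which is the claim.
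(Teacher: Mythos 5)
Your overall strategy coincides with the paper's own proof: compress by the kernel projection $P$ and its complement $Q=1-P$, use Corollary~\ref{C:PMQcompact} to make the off-diagonal corners compact so that $[\mathscr{A}_{\f}]=[P\mathscr{A}_{\f}P]+[Q\mathscr{A}_{\f}Q]$, identify $P\mathscr{A}_{\f}P$ with $\mathcal{T}_{\f}$ via $PDP=0$, and argue that the $Q$-corner has trivial class.

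One justification, however, is wrong. You assert that $(1-P)\mathscr{A}_{\f}(1-P)$ is \emph{invertible} ``because $D$ restricted to $(\ker D)^\perp$ is invertible and $\MM_\f$ is bounded.'' First, under the appendix's standing hypothesis that $(1+\f^2)^{-1}$ vanishes at infinity --- which you yourself invoke later to obtain compact resolvents --- the function $\f$ is necessarily unbounded, so $\MM_\f$ is not a bounded operator. Second, even for bounded $\f$ the inference fails: a bounded self-adjoint perturbation of an invertible self-adjoint operator remains invertible only when the perturbation's norm is smaller than the spectral gap, and nothing forces the size of $\f$ to be smaller than the gap of $D$ at zero. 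The paper never claims invertibility of $Q\mathscr{A}_{\f}Q$ itself; instead it rescales $\f$ by $c\in[0,1]$, notes that $Q\mathscr{A}_{c\f}Q$ is Fredholm for every $c$ (for $c>0$ because the off-diagonal corners of the Fredholm operator $\mathscr{A}_{c\f}$ are compact, and for $c=0$ because $QDQ$ is invertible), and concludes that the class is constant along this path and therefore equal to the trivial class of the invertible endpoint $QDQ$. You do gesture at exactly this deformation (the ``$\operatorname{ind}Q\B_{c\f,u}Q=0$'' step of Lemma~\ref{L:indddt=indBf}), so the repair is simply to drop the invertibility claim and promote the homotopy to the primary argument; with that change your proof is the paper's proof.
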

\begin{proof}
The proof is closely analogous to the proof of Lemma \ref{L:indddt=indBf}. 
Let $P=P^+\oplus P^-$ denote the projection onto the kernel of $ D$, and write $Q=1-P$. 
Since $P D P=0$, we have the equality $\mT_{\f} = P \sA_{\f} P$ (where we used the definition of the Toeplitz operators $T_{f_t} := P M_{f_t} P$). Hence we need to show that $P \sA_{\f} P$ and $\sA_{\f}$ define the same class in $\KK^1(\CC,C(S^1))$. 
By Corollary \ref{C:PMQcompact} we know that 
\[
P \sA_{\f} Q = \mattwo{P^+\MM_{\f}^+Q^+}{0}{0}{-P^-\MM_{\f}^-Q^-} 
\]
is compact, and similarly for $Q \sA_{\f} P$. 
This implies that $P \sA_{\f} P$ and $Q \sA_{\f} Q$ are both Fredholm, and that $[\sA_{\f}] = [P \sA_{\f} P] + [Q \sA_{\f} Q]$. 
Rescaling the function $\f$ by $c>0$, we see that the operator $Q \sA_{c\f} Q$ is Fredholm for any $c>0$. Furthermore, since $ D$ is invertible on $\Ran Q$, we find for $c=0$ that $Q \sA_{0\f} Q = Q D Q$ is invertible, and therefore its class in $\KK^1(\CC,C(S^1))$ is trivial. 
Since we have a continuous path of Fredholm operators for $0\leq c\leq1$, we conclude that the class of $Q \sA_{\f} Q$ is also trivial. Thus we obtain
\begin{align*}
[\sA_{\f}] &= [P \sA_{\f} P] + [Q \sA_{\f} Q] = [P \sA_{\f} P] .
\qedhere
\end{align*}
\end{proof}

The statement and proof of Proposition \ref{prop:Braverman_odd} do not rely on the notion of spectral flow, but merely consider the Fredholm operator $\mT_\f$ and its odd $\KK$-class. Hence Proposition \ref{prop:Braverman_odd} can straightforwardly be generalised to the case where we replace $S^1$ by an arbitrary compact space. 
We thus obtain the following:

\begin{theorem}
Let $E = E^+\oplus E^-$ be a graded Dirac bundle over a complete Riemannian manifold $M$, and let $ D$ be the associated Dirac operator. 
Assume that zero is an isolated point of the spectrum of $ D$, and let $P$ denote the projection onto the kernel of $ D$. 
Let $S$ be a compact topological space, and let $\f = \{f_t\}_{t\in S} \in C(S\times M,M_k(\CC))$ be given by a continuous family of smooth $M_k(\CC)$-valued functions $f_t$ on $M$ such that $(1+\f^2)^{-1}$ vanishes at infinity. We consider the Toeplitz operator $\mT_\f = (P\otimes1) \MM_\f (P\otimes1)$ on the Hilbert $C(S)$-module $C(S,\HH\otimes\CC^k)$. Then we have the equality
\[
[\mT_\f] = [\MM_\f] \otimes_{C_0(M)} [ D] \in \KK^1(\CC,C(S)) .
\]
\end{theorem}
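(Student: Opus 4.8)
The plan is to mirror the argument of Proposition~\ref{prop:Braverman_odd}, which already proves exactly this statement in the special case $S=S^1$. The key observation is that neither the statement of Proposition~\ref{prop:Braverman_odd} nor its proof uses anything specific about the circle: it never invokes the vector field $\partial_t$, the spectral flow, or the product with $[-i\partial_t]$. All that is used is (a) the description $\mT_\f = (P\otimes1)\sA_\f(P\otimes1)$ coming from $PDP=0$; (b) compactness of the off-diagonal corners $(P\otimes1)\sA_\f(Q\otimes1)$ and $(Q\otimes1)\sA_\f(P\otimes1)$, which is Corollary~\ref{C:PMQcompact}; (c) the additivity $[\sA_\f]=[(P\otimes1)\sA_\f(P\otimes1)]+[(Q\otimes1)\sA_\f(Q\otimes1)]$ for a block operator with compact off-diagonal corners; and (d) a homotopy $c\mapsto (Q\otimes1)\sA_{c\f}(Q\otimes1)$, $0\le c\le1$, through regular self-adjoint Fredholm operators, whose endpoint at $c=0$ is $(Q\otimes1)(D)(Q\otimes1)$, invertible because $D$ is invertible on $\Ran Q$; hence the class of the $Q$-corner is trivial.

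Concretely, first I would recall the setup: the assumption $(1+\f^2)^{-1}\to0$ at infinity of $S\times M$ makes $\MM_\f$ a regular self-adjoint operator with compact resolvent on the Hilbert $C(S)$-module $C(S,L^2(M,E\otimes\CC^k))$, so $(\CC,C(S,L^2(M,E\otimes\CC^k)),\MM_\f)$ is an unbounded Kasparov module, and likewise $\sA_\f$ as displayed is a regular self-adjoint operator with compact resolvent representing $[\MM_\f]\otimes_{C_0(M)}[D]$ by \cite[Example~2.38]{BMS16}. Since $\mT_\f=(P\otimes1)\sA_\f(P\otimes1)$, it suffices to show $[(P\otimes1)\sA_\f(P\otimes1)]=[\sA_\f]$ in $\KK^1(\CC,C(S))$. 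Next I would verify that the proof of Corollary~\ref{C:PMQcompact} goes through verbatim with $S$ replacing $S^1$ — the only ingredient was Lemma~\ref{L:commutatorcompact}, whose proof used functional calculus for $D$ around $0\in\CC$ and the vanishing of $d_x\f$ at infinity together with Rellich's lemma, none of which depends on the parameter space. Then I would quote the standard block-decomposition fact in $\KK$-theory: for a regular self-adjoint Fredholm operator with compact off-diagonal corners relative to a pair of complementary projections, the class splits as the sum of the corner classes. Finally I would run the rescaling homotopy on the $Q$-corner and conclude triviality of its class, giving $[\mT_\f]=[\sA_\f]=[\MM_\f]\otimes_{C_0(M)}[D]$.

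The main obstacle, such as it is, is not conceptual but bookkeeping in the Hilbert-module setting: one must make sure that $\mT_\f$, $(P\otimes1)\sA_\f(P\otimes1)$, and the path $(Q\otimes1)\sA_{c\f}(Q\otimes1)$ are genuinely regular self-adjoint operators on the relevant $C(S)$-modules with the Fredholm property in the sense of unbounded $\KK$-theory (equivalently, that their bounded transforms are Fredholm in $\mathcal{L}_{C(S)}$ modulo compacts), and that the homotopy is norm-resolvent continuous in $c$ so that it induces a constant $\KK$-class. For $S$ merely a compact topological space rather than a manifold these are handled by the general machinery of \cite{BJ83,BMS16} — $\MM_\f$ acts fiberwise and commutes with $C(S)$, $P$ is a $C(S)$-linear projection, and the corner operators inherit regularity and the compact-resolvent/Fredholm properties from $\sA_\f$. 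Once these routine verifications are in place the argument is identical to Proposition~\ref{prop:Braverman_odd}, so I would state it that way and refer the reader there for the details.
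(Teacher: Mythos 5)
Your proposal is correct and follows exactly the route the paper takes: the paper's own justification of this theorem is the one-line remark that the proof of Proposition~A.1 (the $S^1$ case) uses nothing about the circle and generalises verbatim to an arbitrary compact $S$, which is precisely the argument you spell out (compact off-diagonal corners via Corollary~\ref{C:PMQcompact}, additivity of the $\KK$-class over the $P$/$Q$ corners, and the rescaling homotopy trivialising the $Q$-corner). Your additional care about regularity and Fredholmness in the Hilbert $C(S)$-module setting is a reasonable elaboration of details the paper leaves implicit, not a departure from its method.
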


\subsection*{Acknowledgements}
I would like to thank Maxim Braverman, Matthias Lesch, and Bram Mesland for an interesting discussion.

\begin{bibdiv}
\begin{biblist}

\bib{BJ83}{article}{
      author={{Baaj}, S.},
      author={{Julg}, P.},
       title={{Th\'eorie bivariante de {K}asparov et op\'erateurs non born\'es
  dans les {$C^{\ast} $}-modules hilbertiens}},
        date={1983},
     journal={C. R. Acad. Sci. Paris S\'er. I Math.},
      volume={296},
       pages={875\ndash 878},
}

\bib{BMS16}{article}{
      author={{Brain}, Simon},
      author={{Mesland}, Bram},
      author={van Suijlekom, Walter~D.},
       title={Gauge theory for spectral triples and the unbounded {K}asparov
  product},
        date={2016},
     journal={J. Noncommut. Geom.},
      volume={10},
       pages={135\ndash 206},
}

\bib{Bun95}{article}{
      author={Bunke, Ulrich},
       title={A {K}-theoretic relative index theorem and {C}allias-type {D}irac
  operators},
        date={1995},
        ISSN={1432-1807},
     journal={Math. Ann.},
      volume={303},
      number={1},
       pages={241\ndash 279},
         url={http://dx.doi.org/10.1007/BF01460989},
}

\bib{Kas80}{article}{
      author={{Kasparov}, G.~G.},
       title={{The operator K-functor and extensions of
  {$C^{\ast}$}-algebras}},
        date={1980},
     journal={Izv. Akad. Nauk SSSR},
      volume={44},
       pages={571\ndash 636},
}

\bib{Dun17}{misc}{
      author={{van den}~{Dungen}, K.},
       title={{The index of generalised Dirac-Schr\"odinger operators}},
        date={2017},
        note={arXiv:1710.09206},
}

\end{biblist}
\end{bibdiv}

\end{document}